\theoremstyle{plain}
\newtheorem{theorem}{Theorem}[section]
\newtheorem{corollary}[theorem]{Corollary}
\newtheorem{proposition}[theorem]{Proposition}
\newtheorem{lemma}[theorem]{Lemma}
\newtheorem{conjecture}[theorem]{Conjecture}
\newtheorem{observation}[theorem]{Observation}
\newtheorem{problem}[theorem]{Problem}
\theoremstyle{definition}
\newtheorem{definition}[theorem]{Definition}
\newtheorem{remark}[theorem]{Remark}
\begin{document}

\begin{center}
{\Large \bf Upper bounds on the k-forcing number of a graph}\\[5ex]

\begin{multicols}{2}

David Amos\\
{\small Texas A \&\ M University\\}

\columnbreak

Yair Caro\\
{\small University of Haifa-Oranim\\}

\end{multicols}

\begin{multicols}{2}

Randy Davila\\
{\small Rice University\\}

\columnbreak

Ryan Pepper\footnote{corresponding author: pepperr@uhd.edu}\\
{\small University of Houston-Downtown\\}

\end{multicols}

\textbf{Abstract}

\end{center}

%\title{Upper bounds on the k-forcing number of a graph}
%\author{Davic Amos, Yair Caro, Randy Davila, Ryan Pepper}
%\date{August 25, 2013}
%\maketitle

\noindent 

Given a simple undirected graph $G$ and a positive integer $k$, the $k$-forcing number of $G$, denoted $F_k(G)$, is the minimum number of vertices that need to be initially colored so that all vertices eventually become colored during the discrete dynamical process described by the following rule. Starting from an initial set of colored vertices and stopping when all vertices are colored: if a colored vertex has at most $k$ non-colored neighbors, then each of its non-colored neighbors becomes colored. When $k=1$, this is equivalent to the zero forcing number, usually denoted with $Z(G)$, a recently introduced invariant that gives an upper bound on the maximum nullity of a graph. In this paper, we give several upper bounds on the $k$-forcing number.  Notable among these, we show that if $G$ is a graph with order $n \ge 2$ and maximum degree $\Delta \ge k$, then $F_k(G) \le \frac{(\Delta-k+1)n}{\Delta - k + 1 +\min{\{\delta,k\}}}$.  This simplifies to, for the zero forcing number case of $k=1$, $Z(G)=F_1(G) \le \frac{\Delta n}{\Delta+1}$.  Moreover, when $\Delta \ge 2$ and the graph is $k$-connected, we prove that $F_k(G) \leq \frac{(\Delta-2)n+2}{\Delta+k-2}$, which is an improvement when $k\leq 2$, and specializes to, for the zero forcing number case, $Z(G)= F_1(G) \le \frac{(\Delta -2)n+2}{\Delta -1}$. These results resolve a problem posed by Meyer about regular bipartite circulant graphs.  Finally, we present a relationship between the $k$-forcing number and the connected $k$-domination number. As a corollary, we find that the sum of the zero forcing number and connected domination number is at most the order for connected graphs. 

\noindent  \textbf{Key words:} zero forcing set, zero forcing number, $k$-forcing, $k$-forcing number, connected dominating sets, connected domination number, k-independence number, rank, nullity.

\section{Introduction and Key Definitions}
Throughout this paper, all graphs are simple, undirected and finite. Let $G=(V,E)$ be a graph. We will use the basic notation: $n=n(G)=|V|$, $m=m(G)=|E|$, $\Delta(G)$, and $\delta(G)$; to denote respectively the order, size, maximum degree and minimum degree of $G$.  To denote the degree of a vertex $v$, we will write $deg(v)$.  The connected domination number, which we will denote with $\gamma_c = \gamma_c(G)$, is the size of a smallest set $S \subseteq V$ such that every vertex not in $S$ is adjacent to at least one vertex that is in $S$, and the subgraph induced by $S$ is connected (see \cite{CaroWestYuster, Chellali, Henning}). The $k$-independence number, which we will denote with $\alpha_k=\alpha_k(G)$, is the cardinality of a largest set $S \subseteq V$ such that the subgraph induced by $S$ has degree less than $k$ (so that $\alpha_1 = \alpha$; see \cite{AmosDavilaPepper, CaroHansberg, CaroPepper, Delavina3, Hansberg, Pepper}). Other necessary definitions will be presented throughout the paper as needed, and for basic graph theory definitions, the reader can consult \cite{West}. 

Now we introduce and define the $k$-forcing number.  Let $k \leq \Delta$ be a positive integer.  A set $S \subseteq V$ is a \emph{$k$-forcing set} if, when its vertices are initially colored (state 1) -- while the remaining vertices are intitially non-colored (state 0) -- and the graph is subjected to the following color change rule, all of the vertices in $G$ will eventually become colored (state 1). A colored vertex with at most $k$ non-colored neighbors will cause each non-colored neighbor to become colored.  The \emph{$k$-forcing number}, which we will denote with $F_k=F_k(G)$, is the cardinality of a smallest $k$-forcing set.  We will call the discrete dynamical process of applying this color change rule to $S$ and $G$ the \emph{$k$-forcing process}. If a vertex $v$ causes a vertex $w$ to change colors during the $k$-forcing process, we say that $v$ \emph{$k$-forces} $w$ (and we note here that a vertex can be $k$-forced by more than one other vertex). Our paper is about upper bounds on the $k$-forcing number.

This concept generalizes the recently introduced but heavily studied notion of the zero forcing number of a graph, which is denoted $Z=Z(G)$.  Indeed, $F_1(G)=Z(G)$, and throughout this paper, we will denote the zero forcing number with $F_1(G)$. The zero forcing number was introduced independently in \cite{AIM} and \cite{Burgarth}.  In \cite{AIM}, it is introduced to bound from below the minimum rank of a graph, or equivalently, to bound from above the maximum nullity of a graph.  Namely, if $G$ is a graph whose vertices are labeled from $1$ to $n$, then let $M(G)$ denote the maximum nullity over all symmetric real valued matrices where, for $i \ne j$, the $ij^{th}$ entry is nonzero if and only if $\{i,j\}$ is an edge in $G$, and zero otherwise. Then, the zero forcing number is an upper bound on $M(G)$, that is, $F_1(G)=Z(G) \ge M(G)$. In \cite{Burgarth}, it is indirectly introduced in relation to a study of control of quantum systems.  Besides its origins in the minimum rank/maximum nullity problem and the control of quantum systems, one can imagine other applications in the spread of opinions or disease in a social network (as described for a similar invariant by Dreyer and Roberts in \cite{Dreyer}). Some of the many other papers written about zero forcing number are: \cite{Chilakammari, Edholm, Eroh, Hogben, Meyer, Row2, Row, Yi}.

Although finding upper bounds on $F_{k}(G)$ in general was a goal, our real inspiration was to try and find upper bounds in terms of $\Delta(G)$ and the order of the graph. In finding such bounds we expose the sharp distinction in the $k$-forcing behavior of connected graphs with $\Delta=2$ and connected graphs with $\Delta=3$. This phenomenon motivates the study of finding an upper bound on $F_{k}(G)$ in terms of $k$ and $\Delta(G)$,  both in case that no connectivity constraint is imposed on $G$, and also in the  basic situation that $G$ is connected. Concentrating on $G$ being connected is of course justified by the simple observation that the $k$-forcing number is additive component-wise. 

The remainder of our paper is organized into as follows: Section 2 -- containing basic results and preliminaries; Section 3 -- containing upper bounds on $k$-forcing number with no connectivity conditions assumed; Section 4 -- containing improved upper bounds on $k$-forcing number for $k$-connected graphs; Section 5 -- containing upper bounds on $k$-forcing number for special families of graphs; and Section 6 -- containing conjectures and future work.  Before moving on, we now provide a few basic but useful facts about the $k$-forcing number. 

\section{Basic facts and preliminary results about $F_k(G)$}

It is easy to deduce that the zero forcing number is at least the minimum degree of the graph.  We generalize that idea with the following.

\begin{proposition}
Let $G=(V,E)$ be a graph with minimum degree $\delta$ and let $k$ be a positive integer. Then, $F_k(G) \geq \delta - k + 1$.
\end{proposition}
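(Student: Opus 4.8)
The plan is to analyze the very first application of the color change rule and count how many vertices must already be colored for any force to be possible. Let $S$ be any $k$-forcing set; it suffices to show $|S| \ge \delta - k + 1$, since the bound will then hold for a minimum such set. First I would dispose of the degenerate cases: if $\delta < k$ then $\delta - k + 1 \le 0$ and the inequality is immediate, while if the process requires no forces at all (that is, $S = V$) then $|S| = n \ge \delta - k + 1$, using $\delta \le n - 1$ and $k \ge 1$. Hence I may assume $\delta \ge k$ and that at least one vertex is forced during the process.

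The key step is to examine the moment of the first force. Before any vertex has been forced, the set of colored vertices is exactly the initial set $S$. For the first force to occur there must be a vertex $v$ that is colored---hence $v \in S$---and that has at most $k$ non-colored neighbors. Since $\deg(v) \ge \delta$, at least $\deg(v) - k \ge \delta - k$ of the neighbors of $v$ are colored at this instant, and because the colored set is still exactly $S$, all of these neighbors lie in $S$. Counting $v$ together with these neighbors gives $|S| \ge 1 + (\delta - k) = \delta - k + 1$, as desired.

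The main point to get right is the bookkeeping at the first force: one must use that \emph{no} vertex has yet been forced, so that every colored vertex---in particular every colored neighbor of $v$---belongs to the initial set $S$ rather than having been colored during the process. Beyond this observation the argument is just a degree count, so I expect no serious obstacle; the only care needed is the handling of the boundary cases $\delta < k$ and $S = V$ noted above.
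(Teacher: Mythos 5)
Your proof is correct and follows essentially the same approach as the paper: examine a vertex that performs a force and count its colored neighbors, which must lie in $S$, giving $|S| \ge \deg(v) - k + 1 \ge \delta - k + 1$. In fact your write-up is slightly more careful than the paper's, since you explicitly restrict to the \emph{first} force (so that all colored vertices are indeed in $S$) and handle the degenerate cases $\delta < k$ and $S = V$, points the paper's argument passes over implicitly.
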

\begin{proof}
Let $S$ be a minimum $k$-forcing set of $G$ and color the vertices of $S$.  If $V-S = \emptyset$, then $|S|=F_k(G)=n$ and the inequality is trivially satisfied.  If $v \in V-S$ is an uncolored vertex that is $k$-forced by $w \in S$, then $w$ has at least $deg(w)-k \geq \delta -k$ colored neighbors.  Thus, counting $w$, there are at least $\delta - k +1$ vertices in $S$, completing the proof.
\end{proof}

It is well known that the zero forcing number of a complete graph $K_n$ is exactly $n-1$.  This observation generalizes as follows.

\begin{observation}
Let $G=K_n$ be a complete graph on $n$ vertices and let $k$ be a positive integer.  Then, $F_k(G) = \max\{n -k,1\}$.
\end{observation}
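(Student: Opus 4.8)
The plan is to prove the two inequalities separately and to organize the argument around the two regimes distinguished by the maximum: the case $n > k$, in which $\max\{n-k,1\} = n-k$, and the case $n \le k$, in which $\max\{n-k,1\} = 1$. In each regime I would establish a matching lower and upper bound, the upper bound coming from exhibiting an explicit $k$-forcing set and the lower bound coming either from Proposition~2.1 or from the trivial observation that at least one vertex must be initially colored.

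For the regime $n > k$, the lower bound $F_k(K_n) \ge n-k$ is immediate from Proposition~2.1: since $K_n$ has $\delta = n-1$, that result gives $F_k(K_n) \ge \delta - k + 1 = n - k$. (One could also argue directly: if fewer than $n-k$ vertices are colored, then more than $k$ vertices are uncolored, and in a complete graph every colored vertex is adjacent to all of them, so no vertex ever has at most $k$ uncolored neighbors and the process is stuck.) For the upper bound, I would color an arbitrary set $S$ of exactly $n-k$ vertices. Then precisely $k$ vertices remain uncolored, and since $K_n$ is complete, any fixed colored vertex is adjacent to all $k$ of them; having at most $k$ uncolored neighbors, it forces every uncolored vertex in a single step. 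Hence $F_k(K_n) \le n-k$, and the two bounds coincide.

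For the regime $n \le k$, the lower bound $F_k(K_n) \ge 1$ is trivial, since any nonempty graph needs at least one initially colored vertex for the process to begin. For the upper bound, I would color a single vertex $v$; because $n \le k$, its number of uncolored neighbors is $n-1 \le k-1 < k$, so $v$ colors all remaining vertices in one step, giving $F_k(K_n) \le 1$. Combining the two regimes yields $F_k(K_n) = \max\{n-k,1\}$. The argument is elementary, so there is no substantive obstacle; the only point requiring care is the bookkeeping that ties each forcing step to the exact count of uncolored neighbors in the complete graph and the verification that the boundary values (for instance $n=k$ and $n=k+1$) are correctly captured by the maximum.
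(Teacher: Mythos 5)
Your proposal is correct and complete: the paper states this result as an \emph{Observation} with no written proof, and your argument is exactly the routine verification intended, with the upper bounds from explicit forcing sets and the lower bound in the $n>k$ regime following from the paper's own Proposition 2.1 (or equivalently your direct ``stuck process'' argument). All boundary cases ($n=k$ and $n=k+1$) are handled correctly, so there is nothing to add.
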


We note in passing that the converse of this observation is true when $k=1$, but this is not the case for $k \ge 2$. So, if $G$ is a graph with zero forcing number $F_1(G)=n-1$, then $G=K_n$. On the other hand, consider the complete bipartite graph $G=K_{3,3}$ and $k=2$.  For this graph, $F_2(G)=2=n-2$, but it is not $K_n$.

Finally, we can compare $k$-forcing numbers with different indices and easily compute some extreme cases with the following.

\begin{proposition}
Let $G=(V,E)$ be a graph. 
\begin{enumerate}
  \item If $G$ is connected with $n \geq 2$, then $F_\Delta(G) = 1$.
  \item If $G$ is connected with $\Delta \geq 2$, then $F_{\Delta - 1}(G) \le 2$ with equality holding if and only if $G$ is $\Delta$-regular.
  \item If $k$ is a positive integer, then $F_k(G) \ge F_{k+1}(G)$.
\end{enumerate}
\end{proposition}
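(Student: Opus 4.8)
The plan is to handle the three parts separately, with parts (1) and (2) powered by a common propagation mechanism and part (3) following from a monotonicity property of the color change rule. The engine for the first two parts is this observation: a colored vertex that has at least one colored neighbor has at most $\Delta - 1$ uncolored neighbors, and a colored vertex of degree less than $\Delta$ has at most $\Delta - 1$ neighbors altogether; in either case such a vertex can $(\Delta-1)$-force (and a fortiori $\Delta$-force). For part (1), I would first note that since every vertex has at most $\Delta$ neighbors, any colored vertex has at most $\Delta$ uncolored neighbors and hence always $\Delta$-forces all of them. Coloring a single vertex $v$ therefore starts an unobstructed cascade: $v$ colors $N(v)$, those color their neighbors, and so on. Because $G$ is connected, whenever an uncolored vertex remains there is an edge from the colored set to it whose colored endpoint may force, so the process colors all of $V$; this gives $F_\Delta(G) \le 1$, and since $n \ge 2$ forces at least one initial vertex, $F_\Delta(G)=1$.

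For the upper bound in part (2), I would pick a vertex $v$ of degree $\Delta$ and color $v$ together with one neighbor $u$. Then $v$ has at most $\Delta-1$ uncolored neighbors, so $v$ $(\Delta-1)$-forces all of $N(v)$. The key point is that every vertex colored from this moment on has a colored neighbor: each vertex of $N(v)$ has $v$ as a colored neighbor, and any subsequently forced vertex $w$ has, by definition, the vertex that forced it as a colored neighbor. By the engine observation such a vertex can $(\Delta-1)$-force, so together with connectivity the color propagates to all of $V$, yielding $F_{\Delta-1}(G) \le 2$.

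For the equality in part (2), I would use the bound $F_k(G) \ge \delta - k + 1$ established above. If $G$ is $\Delta$-regular, then $\delta = \Delta$ and, with $k = \Delta-1$, this gives $F_{\Delta-1}(G) \ge 2$, which combined with the upper bound yields equality. For the converse I would argue the contrapositive: if $G$ is not $\Delta$-regular, some vertex $w$ has degree $\delta < \Delta$; coloring $w$ alone, it has at most $\Delta - 1$ uncolored neighbors and forces all of $N(w)$, after which the same propagation argument colors all of $V$. Hence $\{w\}$ is a $(\Delta-1)$-forcing set and $F_{\Delta-1}(G) = 1 \ne 2$. For part (3), the plan is to show that every $k$-forcing set is also a $(k+1)$-forcing set. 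Taking a minimum $k$-forcing set $S$ and a chronological sequence of forces that colors $V$ under the $k$-rule, I note that each force is performed by a colored vertex with at most $k \le k+1$ uncolored neighbors and colors exactly the same neighbors, so it is also a legal $(k+1)$-forcing move. The identical sequence of moves therefore colors all of $V$ under the $(k+1)$-rule, giving $F_{k+1}(G) \le |S| = F_k(G)$.

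I expect the main obstacle to be making the propagation argument fully rigorous, namely verifying that the process never stalls: in parts (1) and (2) every vertex colored after the initial forcing retains a colored neighbor (or has degree less than $\Delta$) and can itself force, so that connectivity alone suffices to reach every vertex. This is the crux applied in both directions of the equality characterization in part (2). Part (3) is conceptually the simplest, requiring only the monotonicity of the color change rule in $k$.
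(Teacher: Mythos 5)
Your proposal is correct and takes essentially the same approach as the paper: a single-vertex cascade for part (1), propagation from an adjacent colored pair (with the invariant that every newly forced vertex retains a colored neighbor) for part (2), and monotonicity of the color change rule for part (3). The only minor deviation is that for the $\Delta$-regular case you invoke the earlier bound $F_k(G) \geq \delta - k + 1$ to get $F_{\Delta-1}(G) \geq 2$, whereas the paper argues directly that coloring a single vertex cannot start the $(\Delta-1)$-forcing process; both are equally valid.
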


\begin{proof}
To prove (1), assume $G$ is a connected graph with $n \geq 2$. Color a single vertex $v$ and notice that $v$ has at most $\Delta$ non-colored neighbors (by virtue of having at most $\Delta$ neighbors) and will therefore $\Delta$-force all of its non-colored neighbors to change color.  Since the same will be true for all the neighbors of $v$, the result now follows by connectivity of $G$. 

To prove (2), assume $G$ is a connected graph with $\Delta \geq 2$. Color a pair of adjacent vertices $u$ and $v$. Since each of those vertices now has at most $\Delta -1$ non-colored neighbors, they will both force all of their neighbors to change color. Since the same will be true for all of the newly formed pairs of colored adjacent vertices, the result now follows by connectivity of $G$. Namely, $F_{\Delta - 1}(G) \le 2$.  If $G$ is $\Delta$-regular, then coloring just one vertex cannot start the $(\Delta -1)$-forcing process.  This implies $F_{\Delta -1}(G) = 2$ when $G$ is $\Delta$-regular. If $G$ is not $\Delta$-regular, then let $v$ be a vertex with $deg(v)<\Delta$. Color $v$ initially and observe that $v$ has at most $\Delta -1$ non-colored neighbors.  Hence, $v$ will force all its neighbors to change color at the first step of the $(\Delta -1)$-forcing process, giving at least one pair of adjacent colored vertices which will then color the rest of the vertices by connectivity. Hence, if $G$ is not $\Delta$-regular, $F_{\Delta -1}(G)=1$, which finishes the argument.

To prove (3), that is to see that the $k$-forcing number is monotonically non-increasing, let $k$ be a positive integer and $S$ be a minimum $k$-forcing set. Observe that $S$ is also a $(k+1)$-forcing set so $F_{k+1}(G) \le |S| = F_k(G)$.

\end{proof}

From part (2) of the above proposition, we get several interesting and useful corollaries.  For example, we see that: if $G$ is cubic (that is $3$-regular) -- $F_2(G)=2$; if $G$ is $4$-regular -- $F_3(G)=2$; if $G$ is $5$-regular -- $F_4(G)=2$; etc. These, and item (2) above in general, are in sharp contrast to the $k$-forcing number when $k \leq \Delta-2$. In fact, $F_{\Delta -2}(G)$ can already be linear in $n$ when $\Delta =3$, as this is simply the zero forcing number of graphs with $\Delta =3$. As an example to illustrate this, attach a degree one vertex to each vertex of a path on $n \geq 3$ This results in a connected graph with $\Delta =3$ where the zero forcing number grows with $n$. Moreover, part (2) also gives us the well known results that the zero forcing number of paths is equal to $1$, while the zero forcing number of cycles is equal to $2$.

The following result appeared in \cite{Edholm} by Edholm, Hogben, Huynh, LaGrange, and Row, and will be used in subsequent proofs.

\begin{lemma} \cite{Edholm}\label{spread}
Let $G=(V,E)$ be a graph on $n\geq 2$ vertices.  Then,
\begin{enumerate}	
\item For $v\in V(G)$, $F_1(G)-1\leq F_1(G-v)\leq F_1(G)+1$,		
\item For $e\in E(G)$, $F_1(G)-1\leq F_1(G-e)\leq F_1(G)+1$.
\end{enumerate}
\end{lemma}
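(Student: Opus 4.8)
The plan is to prove the four one-sided inequalities separately, all through a single unifying device: comparing the zero forcing process on two graphs that differ by one vertex or one edge, and ``replaying'' a forcing sequence of one graph inside the other. The two \emph{upper} bounds (the inequalities $F_1(G-v)\le F_1(G)+1$ and $F_1(G-e)\le F_1(G)+1$) amount to deleting a vertex or edge, which only \emph{relaxes} the ``unique uncolored neighbor'' condition for the surviving vertices, so these will be easy. The two \emph{lower} bounds (equivalently $F_1(G)\le F_1(G-v)+1$ and $F_1(G)\le F_1(G-e)+1$) amount to adding the vertex or edge back, which \emph{imposes} new constraints; these will require spending the extra unit of budget on a well-chosen seed vertex.

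For the deletion (upper) bounds, I would fix a minimum zero forcing set $S$ of $G$ together with one valid sequence of forces that colors all of $V(G)$. For vertex deletion, let $b$ be the vertex that $v$ forces in this sequence, if $v$ forces anything; I take $(S\setminus\{v\})\cup\{b\}$, or just $S\setminus\{v\}$ if $v$ forces nothing, a set of size at most $F_1(G)+1$. For edge deletion $e=xy$, if neither endpoint forces the other across $e$ I keep $S$; if one endpoint forces the other, I add the forced endpoint to $S$. In each case I replay the original sequence in the smaller graph, discarding only the single force that used the deleted vertex or edge: every remaining force $p\to q$ stays valid because deleting a vertex or an edge can only shrink $p$'s set of uncolored neighbors, so $q$ is still its unique uncolored neighbor. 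The bookkeeping is an induction on the original order maintaining the invariant that the colored set in the smaller graph contains the colored set of $G$ minus the deleted vertex.

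For the addition (lower) bounds I would run the comparison in the other direction. For vertex deletion, take a minimum zero forcing set $S'$ of $G-v$ and form $S'\cup\{v\}$ in $G$. Because $v$ is colored from the outset, it can never be an extra uncolored neighbor of anyone, so every force of $G-v$ replays verbatim in $G$; hence $S'\cup\{v\}$ forces all of $G$ and $F_1(G)\le F_1(G-v)+1$. The edge case is the delicate one. Given a minimum zero forcing set $S'$ of $G-e$ with $e=xy$, adding $e$ back creates a new neighbor constraint at \emph{both} endpoints, yet I can afford to seed only one of them. The fix is to record the times $t_x,t_y$ at which $x$ and $y$ become colored during the $G-e$ process and to seed the \emph{later} one, say $y$ with $t_x\le t_y$. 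Then $x$'s forces are safe because $y$ is colored from the start, while $y$'s forces are safe because $x$ is already colored by time $t_x\le t_y$; the same timeline induction as above then shows $S'\cup\{y\}$ forces all of $G$, giving $F_1(G)\le F_1(G-e)+1$.

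I expect the only genuine obstacle to be this deadlock-avoidance in the edge lower bound: seeding an arbitrary endpoint can fail, since $x$ might be required to force before $y$ is colored while $y$'s eventual coloring depends on precisely that force, so the choice of the later-colored endpoint is essential. The remaining three inequalities reduce to the same bookkeeping lemma, namely that a forcing sequence can be replayed across a single-element change as long as the change never introduces a new uncolored neighbor at the moment a vertex is called upon to force.
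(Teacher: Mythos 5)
Your proposal is correct, but there is nothing in this paper to compare it against: the authors do not prove Lemma~\ref{spread} at all --- it is imported verbatim, with citation, from the paper of Edholm, Hogben, Huynh, LaGrange, and Row \cite{Edholm}, and is used later as a black box (in the Hamiltonian-graph bound and in the tree bound). So your argument should be judged on its own, and it holds up. All four inequalities reduce, as you say, to replaying a chronological forcing sequence across a one-element modification, using the facts that a vertex forces at most once (once it forces, its whole neighborhood is colored, which legitimizes speaking of ``the'' vertex $b$ that $v$ forces) and that a force $p \to q$ survives any change that does not add an uncolored neighbor to $p$ at the moment $p$ acts. The one genuinely delicate step is exactly the one you flag: in proving $F_1(G) \le F_1(G-e)+1$ you must seed the endpoint of $e$ that is colored \emph{later} in the $G-e$ process, since seeding the earlier one can deadlock (e.g., $G-e = P_4$ with forcing set an endpoint $x$, $G = C_4$: adding $x$ again yields a single colored vertex with two uncolored neighbors, which forces nothing). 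Your timeline argument --- $x$'s forces are safe because $y$ is seeded, $y$'s forces are safe because $y$ only acts after time $t_y \ge t_x$, by which point $x$ is colored in the replay --- closes that gap correctly. This is essentially the same style of argument as in the cited reference, so you have independently reconstructed the standard proof rather than found a new one.
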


\section{upper bounds on $F_k(G)$ based on $n$, $\Delta$, $\delta$, and $k$}

In this section and the next, we present the main results of our paper. We need the following three lemmas.

\begin{lemma}\label{lem1}
Let $G=(V,E)$ be a graph on at least two vertices, with minimum degree $\delta\geq 1$ and let $S$ be a minimum $k$-forcing set of $G$ (namely $|S|=F_{k}(G)$). Then for every vertex $v\in S$ there are at least $\min\left\{deg(v), k \right\}$ vertices in $V- S$ adjacent to $v$.
\end{lemma}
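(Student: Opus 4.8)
The plan is to argue by contradiction, using the minimality of $S$. Suppose some vertex $v\in S$ has fewer than $\min\{deg(v),k\}$ neighbors in $V-S$, and write $t$ for the number of neighbors of $v$ in $V-S$, so $t<\min\{deg(v),k\}$. Two consequences are immediate and will drive the whole argument. First, since $t<\min\{deg(v),k\}\le k$ we get $t\le k-1$, hence $t+1\le k$. Second, since $t<\min\{deg(v),k\}\le deg(v)$ we get $t<deg(v)$, so $v$ has at least one neighbor lying inside $S$; fix such a neighbor $w\in S$.

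The construction I would use is to delete $w$ (not $v$) and show that $S'=S-w$ is still a $k$-forcing set, contradicting minimality since $|S'|=F_k(G)-1$. To verify this, run the $k$-forcing process starting from $S'$. At the first step, $v$ is colored, and every neighbor of $v$ that lies in $S-w$ is already colored; therefore the only non-colored neighbors of $v$ are its $t$ neighbors in $V-S$ together with $w$, giving $t+1\le k$ non-colored neighbors in all. Since $v$ is colored and has between $1$ and $k$ non-colored neighbors, the color change rule forces each of them, and in particular $w$ becomes colored. From that moment the colored set contains $S'\cup\{w\}=S$, and because $S$ is itself a $k$-forcing set the process continues to color all of $V$. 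Thus $S'$ is a $k$-forcing set of cardinality $F_k(G)-1$, the desired contradiction, completing the proof.

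The step I expect to be the main obstacle is not any computation but the choice of which vertex to remove. The tempting move is to delete the offending vertex $v$ and try to recover it, but this is hopeless: a minimum $k$-forcing set can never be turned into a forcing set by deleting one of its own vertices, since that would yield a forcing set of size $F_k(G)-1$. The right insight is that a vertex with too few outside-neighbors carries \emph{slack} — at the start it has strictly fewer than $k$ non-colored neighbors — so it can absorb one additional non-colored neighbor and still force. Deleting an inside-neighbor $w$ of $v$ creates exactly one such extra non-colored neighbor, which $v$ then re-forces immediately. Once $w$ is correctly identified, the remaining ingredients are routine: the count $t+1\le k$ is automatic, and the monotonicity of the process (enlarging the set of colored vertices never destroys an available force) lets me invoke the hypothesis that $S$ is forcing to finish.
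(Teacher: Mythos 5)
Your proof is correct and rests on the same core idea as the paper's: argue by contradiction from minimality, delete a colored neighbor of $v$ from $S$, and observe that $v$ — having strictly fewer than $\min\{deg(v),k\}$ non-colored neighbors to begin with — immediately re-forces the deleted vertex, so the colored set recovers $S$ and the hypothesis that $S$ is $k$-forcing finishes the argument. Your execution is in fact slightly cleaner than the paper's: by removing exactly one vertex $w\in S$ and counting $t+1\leq k$ non-colored neighbors, you avoid the paper's split into the cases $deg(v)\leq k$ and $deg(v)>k$, where a variable number of colored neighbors of $v$ is removed.
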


\begin{proof}
Proceeding by contradiction, suppose on the contrary that $S$ is a minimum $k$-forcing set containing a vertex $v$ having at most $\min\left\{deg(v), k\right\}-1$ neighbors in $V- S$. Color the vertices of $S$.  Denote by $A \subseteq S$ the set of colored neighbors of $v$ and denote by $B \subseteq V-S$ the set of non-colored neighbors of $v$.  Note that $|B| \le \min\{deg(v),k\} -1$.  We now proceed in two cases.

\emph{Case 1: $deg(v)\leq k$.} In this case, notice that $|B| \le deg(v)-1$ which implies $|A| \ge 1$.  Let $S'=S-A$ be the vertices remaining in $S$ after all vertices of $A$ are taken out (that is, the coloring is removed). Now apply the $k$-forcing process to $S'$ and observe that after the first step, $v$ will have $k$-forced its entire neighborhood to change color.  Since this includes the vertices originally in $A$, all of $S$ will be colored, which then $k$-forces the rest of the graph because $S$ was a $k$-forcing set.  Clearly, $|S'|<|S|$ which contradicts the fact that $S$ was a minimum $k$-forcing set.

\emph{Case 2: $deg(v)>k$.} In this case, notice that $|B| \leq k-1 <k$.  Let $j$ be a positive integer such that $|B|=k-j$.  Thus, $|A|=deg(v)-|B|=deg(v)-k+j \ge j+1$, where the last inequality is true because $deg(v)-k>0$. Let $S'$ be the vertices remaining in $S$ after $j$ vertices from $A$ are taken out (that is, the coloring is removed). Observe that in $S'$, $v$ now has exactly $deg(v)-k$ neighbors.  Hence, $v$ has exactly $k$ non-colored neighbors in $V-S'$.  Apply the $k$-forcing process to $S'$ and observe that after the first step, $v$ will have $k$-forced its remaining non-colored neighbors to change color. Since this includes the $j$ vertices originally in $A$, all of $S$ will be colored, which then $k$-forces the rest of the graph because $S$ was a $k$-forcing set.  Clearly, $|S'|<|S|$ which contradicts the fact that $S$ was a minimum $k$-forcing set.
\end{proof} 

\begin{lemma}\label{lem2}
Let $G=(V,E)$ be a graph and let $S$ be a $k$-forcing set of $G$ with $k\geq 2$. If $w\in V-S$ and $deg(w) \geq k$, then $w$ has at least $k-1$ neighbors in $V-S$.
\end{lemma}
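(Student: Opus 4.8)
The plan is to argue by contradiction, using the forcing process itself to produce a strictly smaller forcing set. Suppose $w\in V-S$ satisfies $deg(w)\ge k$ but has at most $k-2$ neighbors in $V-S$. Writing $A=N(w)\cap S$ for the colored neighbors of $w$ and $B=N(w)\cap(V-S)$ for its non-colored neighbors, the assumption gives $|B|\le k-2$, and therefore $|A|=deg(w)-|B|\ge k-(k-2)=2$. In particular $w$ has two distinct neighbors $a,b\in S$. This is where the hypothesis $k\ge 2$ does real work: it is exactly what makes ``at most $k-2$'' a genuine restriction forcing $|A|\ge 2$ (for $k=1$ the statement is vacuous).

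The key move I would make is a local swap: set $S'=(S\setminus\{a,b\})\cup\{w\}$, which has size $|S|-1$ since $w\notin S$. First I would verify that $S'$ is again a $k$-forcing set. When the vertices of $S'$ are colored, $w$ is colored while $a$ and $b$ are not, so the non-colored neighbors of $w$ are precisely the vertices of $B\cup\{a,b\}$, numbering at most $|B|+2\le k$. Hence at the very first step $w$ forces its entire non-colored neighborhood, recoloring $a$, $b$, and every vertex of $B$. At that moment the colored set contains $(S\setminus\{a,b\})\cup\{a,b\}=S$, and since $S$ is a $k$-forcing set, the monotonicity of the forcing process — enlarging the initially colored set can only enlarge the final colored set — guarantees that the process started from $S'$ colors all of $V$.

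Thus $S'$ is a $k$-forcing set with $|S'|=|S|-1$. Taking $S$ of minimum cardinality — the case in which the statement carries content and in which the lemma is subsequently applied, exactly as in Lemma~\ref{lem1} — this contradicts $|S|=F_k(G)$, completing the argument. I would make the monotonicity step precise by noting that if a color set $T$ reached during the $S'$-process contains $S$, then its forcing closure contains the closure of $S$, which is all of $V$; since $T$ is a subset of the closure of $S'$, this forces the closure of $S'$ to be $V$ as well.

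The main obstacle is the middle step: confirming that the swapped set $S'$ still forces. Everything there rests on the single inequality $|B|+2\le k$, which is precisely what allows the lone vertex $w$ to immediately recover the two deleted vertices $a$ and $b$ in one step; once $S$ is contained in the colored set, monotonicity of the closure does the rest. I expect no difficulty in the degree count $|A|\ge 2$, but the swap-and-recover verification is the heart of the proof and is where I would spend the most care.
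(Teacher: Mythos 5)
Your proposal is correct and essentially identical to the paper's own proof: the paper likewise picks two neighbors $u,v\in S$ of $w$ and passes to the swapped set $D=(S\cup\{w\})-\{u,v\}$ of size $|S|-1$, observing that $w$, having at most $(k-2)+2=k$ non-colored neighbors, immediately $k$-forces $u$ and $v$ back so that the colored set contains $S$ and the process completes. Your remark about minimality is also well taken: although the lemma is stated for an arbitrary $k$-forcing set, the paper's concluding contradiction (``a $k$-forcing set with smaller cardinality than $S$'') only has force when $S$ is minimum, which is exactly how the lemma is applied in Theorem~\ref{main}.
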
 

\begin{proof}
Proceeding by contradiction, suppose there is $w\in V-S$ with $deg(w)\geq k\geq 2$, such that $w$ has at most $k-2$ neighbors in $V-S$. Now, the number of neighbors of $w$ in $S$ is at least $deg(w)-k+2\geq 2$ (as $deg(w)\geq k$). Let $u$ and $v$ be two neighbors of $w$ in $S$. Consider $D=( S\cup\{w\})- \{u,v\}$, with $|D|=|S|-1$, and color the vertices of $D$.  Now as $w$ is in $D$ and has at most $k$ non-colored neighbors in $V-D$ (having at most $k-2$ to start and having added $2$), it will $k$-force its neighborhood to change color. In particular, $w$ will $k$-force $u$ and $v$ producing a colored set of vertices containing $S$, which would then color all vertices because $S$ is a $k$-forcing set.  Hence, $D$ is a $k$-forcing set with smaller cardinality than $S$, a contradiction.
\end{proof}   

\begin{lemma}\label{lem3}
Let $G=(V,E)$ be a graph and let $S$ be a $k$-forcing set of $G$ with $k\geq 2$. If $w \in V-S$ and $2 \leq deg(w) <k$, then $w$ has at most one neighbor in $S$.
\end{lemma}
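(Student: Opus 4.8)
The plan is to argue by contradiction in close parallel with the proof of Lemma~\ref{lem2}, and the key leverage is again the minimality of $S$ (which is implicit here and is exactly what makes the conclusion non-vacuous, since an arbitrary non-minimum $k$-forcing set could easily leave $w$ with many neighbors inside it). So I would suppose $w \in V-S$ satisfies $2 \le deg(w) < k$ and, contrary to the claim, that $w$ has at least two neighbors $u$ and $v$ lying in $S$. As in Lemma~\ref{lem2}, the idea is to swap $w$ in for $u$ and $v$: set $D = (S \cup \{w\}) - \{u,v\}$ and color its vertices. Since $w \notin S$ while $u$ and $v$ are two \emph{distinct} vertices of $S$, we get $|D| = |S| - 1$.

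The main verification is that $w$ can force immediately. Because $deg(w) < k$, the vertex $w$ has strictly fewer than $k$ neighbors in total, hence at most $k$ non-colored neighbors regardless of how the coloring of $D$ falls; thus $w$ $k$-forces its entire neighborhood in the first step. In particular $u$ and $v$, the only vertices of $S$ absent from $D$, become colored, so that $S \subseteq D \cup \{u,v\}$ is now entirely colored. Since $S$ is itself a $k$-forcing set, once all of $S$ is colored the remaining vertices of $G$ are colored by the $k$-forcing process. Therefore $D$ is a $k$-forcing set with $|D| = |S| - 1 < |S|$, contradicting the minimality of $S$.

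I expect no substantive obstacle. The only points needing care are to confirm the forcing condition for $w$ (which the hypothesis $deg(w) < k$ supplies for free, exactly as $deg(w) \ge k$ supplied the two needed neighbors in Lemma~\ref{lem2}) and to note explicitly that $u \ne v$ so the count $|D| = |S| - 1$ is correct. The hypothesis $deg(w) \ge 2$ serves only to allow $w$ to have two neighbors in $S$ in the first place; when $deg(w) < 2$ the conclusion holds trivially.
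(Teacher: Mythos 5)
Your proof is correct and essentially identical to the paper's: the same swap $D=(S\cup\{w\})-\{u,v\}$, the same verification that $w$ forces its whole neighborhood (because $deg(w)\le k-1$), and the same contradiction with the minimality of $S$. You were also right to flag that minimality of $S$ is implicitly assumed --- the paper's own proof relies on it in exactly the same way (its closing ``a contradiction'' only makes sense for a smallest $k$-forcing set), even though the lemma statement omits the word ``minimum.''
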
 

\begin{proof}
Proceeding by contradiction, suppose there is $w \in V-S$ with $2 \leq deg(w) <k$, such that $w$ has at least two neighbors in $S$.  Let $u$ and $v$ be two neighbors of $w$ in $S$. Consider $D=(S\cup \{w\})- \{u,v\}$, with $|D|=|S|-1$, and color the vertices of $D$. Now as $w$ is in $D$ and has at most $k-1$ non-colored neighbors in $V-D$, it will $k$-force its neighborhood to change color. In particular, $w$ will $k$-force $u$ and $v$ producing a colored set of vertices containing $S$, which will then color all vertices because $S$ is a $k$-forcing set. Hence, $D$ is a $k$-forcing set with smaller cardinality than $S$, a contradiction.
\end{proof}   

We are now ready to present our main result, a sharp upper bound on the $k$-forcing number of a graph in terms of $k$, $\delta$, $\Delta$, and $n$. 
 
\begin{theorem}\label{main}
Let $k$ be a positive integer and let $G=(V,E)$ be a graph on $n\geq 2$ vertices with maximum degree $\Delta\geq k$ and minimum degree $\delta \geq 1$. Then,
\[
F_k(G) \leq \frac{(\Delta-k+1)n}{\Delta -k+1+\min\{\delta,k\}}.
\]

\end{theorem}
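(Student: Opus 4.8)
The plan is to fix a minimum $k$-forcing set $S$, so that $|S| = F_k(G)$, and to double-count the edges running between $S$ and $V-S$. Writing $e(S,V-S)$ for the number of such edges, a short rearrangement shows that the claimed inequality is equivalent to
\[
\min\{\delta,k\}\,|S| \le (\Delta-k+1)(n-|S|),
\]
since $n=|S|+|V-S|$ and $\Delta-k+1+\min\{\delta,k\}>0$. So it suffices to sandwich $e(S,V-S)$ between a lower bound coming from the $S$ side and an upper bound coming from the $V-S$ side.

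For the lower bound I would count from $S$. By Lemma~\ref{lem1}, every $v\in S$ has at least $\min\{deg(v),k\}$ neighbors in $V-S$, and since $deg(v)\ge\delta$ this is at least $\min\{\delta,k\}$. Summing over $v\in S$ (each $S$-to-$(V-S)$ edge being counted exactly once, from its endpoint in $S$) yields $e(S,V-S)\ge\min\{\delta,k\}\,|S|$. I would also record here that $V-S\neq\emptyset$: because $\delta\ge 1$, deleting any single vertex $v$ leaves some neighbor of $v$ with exactly one non-colored neighbor, so $V\setminus\{v\}$ is a $k$-forcing set and $F_k(G)\le n-1$; this keeps the application of Lemma~\ref{lem1} non-vacuous.

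For the upper bound I would count from $V-S$ and argue that each $w\in V-S$ has at most $\Delta-k+1$ neighbors in $S$, giving $e(S,V-S)\le(\Delta-k+1)|V-S|$. This requires a case analysis on $deg(w)$. When $k\ge 2$ and $deg(w)\ge k$, Lemma~\ref{lem2} gives $w$ at least $k-1$ neighbors in $V-S$, so at most $deg(w)-(k-1)\le\Delta-k+1$ of its neighbors lie in $S$. When $k\ge 2$ and $2\le deg(w)<k$, Lemma~\ref{lem3} caps the neighbors of $w$ in $S$ at one, and $1\le\Delta-k+1$ because $\Delta\ge k$. The remaining cases, namely $deg(w)=1$ and the whole regime $k=1$ (where Lemmas~\ref{lem2} and~\ref{lem3} do not apply), are handled by the trivial estimate: if $deg(w)=1$ then $w$ has at most $1\le\Delta-k+1$ neighbors in $S$, and if $k=1$ then $w$ has at most $deg(w)\le\Delta=\Delta-k+1$ neighbors in $S$.

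Combining the two bounds gives $\min\{\delta,k\}\,|S|\le e(S,V-S)\le(\Delta-k+1)(n-|S|)$, and rearranging produces the theorem. The only real obstacle is the upper-bound case analysis: one must partition the possible values of $deg(w)$ so that exactly one of Lemmas~\ref{lem2}/\ref{lem3} or a trivial estimate applies to each $w$, and must invoke the hypothesis $\Delta\ge k$ precisely to guarantee $\Delta-k+1\ge 1$ in the small-degree and $k=1$ cases. Everything else is bookkeeping.
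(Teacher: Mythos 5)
Your proposal is correct and follows essentially the same argument as the paper: a double count of $e(S,V-S)$ with the lower bound from Lemma~\ref{lem1} and the upper bound obtained by partitioning $V-S$ according to $deg(w)\ge k$, $2\le deg(w)<k$, and $deg(w)=1$ via Lemmas~\ref{lem2} and~\ref{lem3}. The only cosmetic difference is that the paper treats $k=1$ as a separate top-level case with the trivial bound $\Delta$, whereas you fold it into the upper-bound case analysis; your extra observation that $V-S\neq\emptyset$ is harmless bookkeeping.
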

\begin{proof}
Suppose first $k=1$. Let $S$ be a minimum $1$-forcing set and $Q=V- S$. Denote the number of edges between $S$ and $Q$ by $e(S,Q)$. Each vertex $w\in Q$ has at most $\Delta$ neighbors in $S$. Hence, $|Q|\Delta\geq e(S,Q)$. By Lemma \ref{lem1}, for $k=1$, each vertex $v\in S$ has at least one neighbor in $Q$. Hence $e(S,Q)\geq |S|$. Combining these we get; 
\[|Q|\Delta\geq e(S,Q)\geq |S|, \]
which, after substituting $|Q|=|V-S|=n-|S|$, yields;
\[(n-|S|)\Delta\geq |S|.\] 
From this, after rearranging, we deduce,
\[ F_{1}(G)=|S|\leq \frac{n\Delta}{\Delta+1}. \] 
This proves the theorem when $k=1$.

Suppose now $k\geq 2$. Let $S$ be a minimum $k$-forcing set and $Q=V- S$. Denote the number of edges between $S$ and $Q$ by $e(S,Q)$. Partition $Q=W\cup X \cup Y$ into the three sets $W,X,$ and $Y$, defined by:
\[ W=\{ v\in Q \mid deg(v)\geq k \}; \] 
\[ X=\{ v \in Q \mid 2 \leq deg(v) <k \}; \] 
\[ Y = \{ v \in Q \mid deg(v)=1 \}. \]
By Lemma \ref{lem2}, each vertex $v \in W$ has at least $k-1$ neighbors in $Q$ and thus at most $deg(v) -k+1 \leq \Delta -k+1$ neighbors in $S$. Therefore, $W$ contributes at most $(\Delta-k+1)|W|$ edges to $e(S,Q)$. By Lemma \ref{lem3}, each vertex $v \in X$ has at most one neighbor in $S$. Therefore, since $\Delta \geq k$, $X$ contributes at most $|X| \leq (\Delta-k+1)|X|$ edges to $e(S,Q)$.  Clearly, each vertex $v \in Y$ has at most one neighbor in $S$. Therefore, again since $\Delta \geq k$, $Y$ contributes at most $|Y| \leq (\Delta-k+1)|Y|$ edges to $e(S,Q)$. Putting this together, and since $|Q|=|W|+|X|+|Y| = |V-S|=n-|S|$, we get the following upper bound:
\begin{equation}\label{upper}
e(S,Q)  \leq (\Delta-k+1)(|W|+|X|+|Y|) = (\Delta-k+1)(n-|S|).
\end{equation}
By Lemma \ref{lem1}, each vertex $v\in S$ has at least $\min\{deg(v), k\} \geq \min\{\delta,k\}$ neighbors in $Q$. Hence, we get the following lower bound:
\begin{equation}\label{lower}
e(S,Q) \geq |S|\min\{\delta,k\}.
\end{equation}
Combining the upper bound on $e(S,Q)$ from Inequality (\ref{upper}) with the lower bound on $e(S,Q)$ from Inequality (\ref{lower}), we find;
\[
|S|\min\{\delta,k\} \leq (\Delta-k+1)(n-|S|),
\]
which, after rearranging yields, $|S|\leq \frac{(\Delta-k+1)n}{\Delta-k+1+\min\{\delta, k\}}$, completing the proof since $|S|=F_k(G)$. 
\end{proof}

Next we present some corollaries to Theorem \ref{main}. First, by considering graphs with $\delta \geq k$, we can simplify the expression as seen below.

\begin{corollary}\label{kcor}
Let $k$ be a positive integer and let $G=(V,E)$ be a graph on $n \geq 2$ vertices with minimum degree $\delta \geq k$ and maximum degree $\Delta$. Then,
\[
F_k(G) \leq \frac{(\Delta-k+1)n}{\Delta+1},
\]
and this inequality is sharp.
\end{corollary}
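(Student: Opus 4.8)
The plan is to obtain the inequality as an immediate specialization of Theorem \ref{main} and then to certify sharpness with an explicit infinite family. Since we now assume $\delta \geq k$, the term $\min\{\delta,k\}$ that appears in Theorem \ref{main} is exactly $k$. The first step is therefore to substitute this value into the bound of Theorem \ref{main}, so that the denominator $\Delta-k+1+\min\{\delta,k\}$ collapses to $\Delta-k+1+k=\Delta+1$. This gives $F_k(G)\leq \frac{(\Delta-k+1)n}{\Delta+1}$ with no further computation, so the inequality requires no new idea beyond invoking the preceding theorem.

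For sharpness, the natural candidates are the complete graphs, since there $\delta=\Delta=n-1$ and the hypothesis $\delta\geq k$ holds precisely when $n\geq k+1$. I would take $G=K_n$ with $n\geq k+1$ and evaluate both sides. The Observation on complete graphs gives $F_k(K_n)=\max\{n-k,1\}=n-k$, where the equality uses $n\geq k+1$. On the other hand, substituting $\Delta=n-1$ into the right-hand side yields $\frac{(\Delta-k+1)n}{\Delta+1}=\frac{(n-k)n}{n}=n-k$. Since the two quantities agree, equality holds and the bound is attained.

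The main obstacle here is not the algebra, which is entirely routine, but rather exhibiting a sharpness example that simultaneously meets the constraint $\delta\geq k$ and saturates the inequality. The complete graphs $K_n$ with $n\geq k+1$ accomplish both at once, and because there is one such graph for each admissible order, sharpness is witnessed by an infinite family rather than an isolated case. I would close by remarking that this recovers the familiar extremal behavior of complete graphs for the zero forcing number when $k=1$, where the bound becomes $\frac{\Delta n}{\Delta+1}$ and is met by $K_{\Delta+1}$.
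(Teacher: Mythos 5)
Your proposal is correct and follows essentially the same route as the paper: the inequality is obtained by substituting $\min\{\delta,k\}=k$ into Theorem \ref{main}, and sharpness is witnessed by complete graphs, since $F_k(K_n)=n-k$ matches $\frac{(\Delta-k+1)n}{\Delta+1}=n-k$ when $\Delta=n-1$ and $n\geq k+1$. The only cosmetic difference is that the paper varies the example by taking disjoint copies of $K_{\Delta+1}$ (fixing $\Delta$), whereas you vary the order $n$; both yield infinite families of extremal graphs.
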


To see that equality in the above corollary can be achieved, let $G=K_{\Delta+1}$, with $\Delta \ge k$, and consider any number of disjoint copies of $G$. By further specifying Theorem \ref{main} to the case $k=1$, that is, by focusing on the zero forcing number, we get the simplification below (put $k=1$ into Corollary \ref{kcor}). This result answers an open problem posed by Meyer in \cite{Meyer}, where it was asked whether there was an upper bound on zero forcing number for bipartite circulant graphs in terms of the maximum degree and the order. The corollary below answers this question in the affirmative in a much more general way.

\begin{corollary}\label{ratio}
Let $G=(V,E)$ be a graph with $\delta \geq 1$. Then,
\[
Z(G)=F_1(G) \leq \frac{\Delta}{\Delta+1}n,
\]
and this inequality is sharp.
\end{corollary}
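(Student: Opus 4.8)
The plan is to obtain this statement as a direct specialization of Theorem \ref{main} to the case $k=1$, supplemented by a short construction certifying sharpness. First I would verify that the hypotheses of Theorem \ref{main} are inherited from the single assumption $\delta \geq 1$. Since $\delta \geq 1$, the graph has no isolated vertices, so $n \geq 2$; moreover $\Delta \geq \delta \geq 1 = k$, so both the order condition $n \geq 2$ and the degree condition $\Delta \geq k$ demanded by Theorem \ref{main} hold automatically. This confirms the theorem applies.

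With the hypotheses in place, I would substitute $k=1$ into the conclusion of Theorem \ref{main}. The numerator $(\Delta-k+1)$ collapses to $\Delta$, and the denominator $(\Delta-k+1+\min\{\delta,k\})$ collapses to $\Delta+\min\{\delta,1\}$, yielding
\[
F_1(G) \leq \frac{\Delta n}{\Delta + \min\{\delta,1\}}.
\]
Because $\delta \geq 1$ forces $\min\{\delta,1\} = 1$, this simplifies immediately to $F_1(G) \leq \frac{\Delta}{\Delta+1}n$, the claimed bound. Equivalently, one may simply set $k=1$ in Corollary \ref{kcor}, whose hypothesis $\delta \geq k$ reads $\delta \geq 1$ in this case.

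To establish sharpness, I would exhibit a family attaining equality. Take any positive number of disjoint copies of $K_{\Delta+1}$, say $t$ copies, so that $n = t(\Delta+1)$ and the maximum degree equals $\Delta$. By the complete-graph observation stated earlier in the excerpt, $F_1(K_{\Delta+1}) = (\Delta+1)-1 = \Delta$, and since the zero forcing number is additive over connected components we get $F_1(G) = t\Delta$. On the other hand, the upper bound evaluates to $\frac{\Delta \cdot t(\Delta+1)}{\Delta+1} = t\Delta$, so equality holds for every such graph.

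Since the whole argument is a mechanical substitution into an already-proven theorem together with a direct computation on a well-understood family, I anticipate no genuine obstacle. The only point requiring mild care is the first step, namely confirming that the side conditions $n \geq 2$ and $\Delta \geq k$ of Theorem \ref{main} follow from $\delta \geq 1$; once that is checked, the remainder is routine.
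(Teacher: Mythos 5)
Your proposal is correct and matches the paper's own treatment: the bound is obtained by substituting $k=1$ into Theorem \ref{main} (equivalently Corollary \ref{kcor}), and sharpness is certified by disjoint unions of complete graphs $K_{\Delta+1}$, using additivity of the forcing number over components. Your added verification that $\delta \geq 1$ forces $n \geq 2$ and $\Delta \geq k$ is a nice touch of care, but otherwise the argument is the same as in the paper.
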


Equality is achieved for disjoint unions of complete graphs of the same order.  This yields, for example, $Z(G)=F_1(G)\leq \frac{3n}{4}$ when $\Delta=3$. When one additionally imposes the condition of connectivity, we can get nice improvements as described in the next section.

\section{Improved upper bounds on $F_k(G)$ for connected graphs}

In this section, we add the condition that the graphs are connected and get improvements on Theorem \ref{main} in the case that $k\leq 2$ (including the case for zero forcing number). First, we will need the following result, which itself has as a corollary that the zero forcing number is bounded above by the order minus the connected domination number -- that is, $Z(G)=F_1(G)\leq n(G)-\gamma_c(G)$.

We need a few more definitions before proceeding. A \emph{$k$-dominating set} of a graph $G$ is a set $D$ of vertices such that every vertex not in $D$ is adjacent to at least $k$ vertices in $D$. The \emph{$k$-domination number}, denoted $\gamma_k(G)$ is the cardinality of a smallest $k$-dominating set (see the survey \cite{Chellali} for more on $k$-domination).  When $k=1$, this is the same as a dominating set and the domination number. A \emph{connected $k$-dominating set} is a $k$-dominating set whose vertices induce a connected subgraph. We will denote the cardinality of a smallest connected $k$-dominating set by $\gamma_{k,c}(G)$ and call this the \emph{connected $k$-domination number}. A graph $G$ is called \emph{$k$-connected} if it has more than $k$ vertices and the removal of any set (even an empty set) of fewer than $k$ vertices results in a connected subgraph.

\begin{lemma}\label{lem4}
Let $k$ be a positive integer and $G=(V,E)$ be a $k$-connected graph with $n > k$. If $S$ is a smallest $k$-forcing set such that the subgraph induced by $V-S$ is connected, then $V-S$ is a connected $k$-dominating set of $G$.
\end{lemma}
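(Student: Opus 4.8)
The plan is to prove that $V-S$, which is connected by hypothesis, is also $k$-dominating; that is, every vertex of $S$ has at least $k$ neighbours in $V-S$. First I would record two structural facts. Since $G$ is $k$-connected it has no vertex of degree less than $k$: if $\deg(u)\le k-1$, deleting the at most $k-1$ neighbours of $u$ (a set of fewer than $k$ vertices) would isolate $u$ from the remaining $n-\deg(u)\ge 2$ vertices, contradicting $k$-connectivity; hence $\delta\ge k$. Also, since $V\setminus\{u\}$ is a $k$-forcing set whose complement is a single vertex, the minimal such $S$ satisfies $|S|\le n-1$, so $V-S\neq\emptyset$.

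Next I would argue by contradiction: suppose some $v\in S$ has at most $k-1$ neighbours in $V-S$. Because $\deg(v)\ge\delta\ge k$, the vertex $v$ then has at least one neighbour in $S$, and (after handling separately the degenerate case in which $v$ has \emph{no} neighbour in $V-S$, where all of $v$'s neighbours lie in $S$) at least one neighbour in $V-S$. The natural move is to transfer $v$ from $S$ into the complement, setting $S'=S-\{v\}$, so that $V-S'=(V-S)\cup\{v\}$. Connectivity of the new complement is immediate from connectivity of $G[V-S]$ together with the fact that $v$ has a neighbour in $V-S$. If I can also show that $S'$ remains a $k$-forcing set, then $S'$ is a strictly smaller $k$-forcing set with connected complement, contradicting the minimality of $S$ and completing the proof.

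To control the forcing condition I would use the fort reformulation: a set is a $k$-forcing set precisely when its complement contains no nonempty $k$-fort, where a $k$-fort is a nonempty $U$ such that every vertex outside $U$ has either $0$ or at least $k+1$ neighbours in $U$ (this is exactly the configuration at which the colouring process stalls). Since $S$ is a $k$-forcing set, $V-S$ contains no nonempty $k$-fort, so any $k$-fort inside $V-S'=(V-S)\cup\{v\}$ must contain $v$. Thus the whole argument reduces to ruling out a $k$-fort $U$ with $v\in U\subseteq(V-S)\cup\{v\}$. Two quick observations constrain such a $U$: every neighbour of $v$ lying in $S$ is outside $U$ yet adjacent to $v\in U$, hence has at least $k+1$ neighbours in $U$; and $U\neq\{v\}$, since $U=\{v\}$ would force each neighbour of $v$ to have at least $k+1$ neighbours in the singleton $\{v\}$, which is impossible once $\deg(v)\ge 1$.

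I expect the elimination of such a fort to be the main obstacle. Connectivity of $G[V-S]$ and the bound $\deg(v)\ge k$ alone do not suffice, since $k$-forts genuinely occur in $k$-connected graphs (for example, one side of $K_{k+1,k+1}$), so the argument must truly exploit the \emph{minimality} of $S$ rather than connectivity by itself. The route I would pursue is to show that a fort $U\ni v$ in $(V-S)\cup\{v\}$ exhibits $U\cap(V-S)$ as a nonempty fortified piece of the connected set $V-S$ whose only breach is through $v$, and then to convert this structure into a $k$-forcing set with connected complement smaller than $S$ — either by selecting the offending vertex $v$ on the boundary of $V-S$ so that no such fort can arise, or by using the fort to relocate a block of vertices across the cut while preserving both the forcing property and the connectivity of the complement. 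Making this conversion precise, so that the minimality of $S$ is contradicted, is the crux of the proof.
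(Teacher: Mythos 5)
Your proposal sets up the right framework (an exchange argument against the minimality of $S$, together with the correct observations that $k$-connectivity forces $\delta\ge k$ and that $V-S\neq\emptyset$), but it stops exactly at the decisive point: you never show that the modified set is still a $k$-forcing set, and you say yourself that eliminating a $k$-fort through $v$ ``is the crux of the proof.'' That is a genuine gap, not a deferred detail. The deeper problem is the exchange you commit to: always deleting the deficient vertex $v$ itself. Nothing in your setup gives you leverage on whether $S-\{v\}$ forces, and your own example ($K_{k+1,k+1}$) shows that connectivity and degree hypotheses cannot supply it. You also leave unresolved the degenerate case where $v$ has no neighbor in $V-S$; there, deleting $v$ destroys connectivity of the complement, so your exchange fails outright and some other move is required.

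The missing idea --- and this is how the paper's proof runs --- is to choose the deleted vertex so that it is re-colored at the \emph{very first step} of the process by a deficient vertex that remains in $S$; once that happens, the colored set contains all of $S$, and forcing finishes because $S$ is a $k$-forcing set, with no fort analysis needed. Concretely, let $A\subseteq S$ be the vertices with fewer than $k$ neighbors in $V-S$ and $B=S-A$. If some $u\in A$ has a neighbor $w\in B$, delete $w$: then $u$ has at most $(k-1)+1=k$ non-colored neighbors, so $u$ forces $w$ immediately, and the complement stays connected because $w\in B$ has a neighbor in $V-S$. If there is no $A$--$B$ edge, connectivity of $G$ (using $A\neq\emptyset$ and $V-S\neq\emptyset$) yields $v\in A$ with a neighbor $x\in V-S$; either every neighbor of $v$ lies in $V-S$, whence $\deg(v)\le k-1$ contradicts $\delta\ge k$, or $v$ has a neighbor $u\in A$, and then deleting $v$ does work: $u$, having at most $k-1$ neighbors in $V-S$ plus the newly uncolored $v$, forces $v$ back at step one, while $x$ preserves connectivity. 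Note that your move (delete $v$ itself) is legitimate only in this last sub-case, and it is precisely the deficiency of the neighbor $u$ that certifies the forcing property --- the certificate your fort-based route was missing.
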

\begin{proof}
Proceeding by contradiction, let $S$ be a smallest $k$-forcing set such that the subgraph induced by $V-S$ is connected (that such a set exists can be seen by setting $S$ to be any collection of $n-1$ vertices, in which case $V-S$ is vacuously connected). Assume $V-S$ is not a $k$-dominating set. Let $A$ be the set of all vertices in $S$ which have less than $k$ neighbors in $V-S$, and let $B=S-A$. Since $V-S$ is not $k$-dominating, we know that $A \neq \emptyset$. 

Suppose first that there is a vertex $u \in A$ which has a neighbor $w\in B$. Since $w\in B$, it has at least $k\geq 1$ neighbors in $V-S$.  Let $x\in V-S$ be a neighbor of $w$. Now, let $S'=S-\{w\}$. Observe that $S'$ is a $k$-forcing set since, after coloring its vertices and applying the $k$-forcing process, $u$ will $k$-force $w$ on the first step -- as $u$ has at most $k$ non-colored neighbors.  Hence we will arrive at a colored set of vertices containing all of $S$, which will then $k$-force all of $G$ since $S$ is a $k$-forcing set. Moreover, the subgraph induced by $V-S'$ is connected since $V-S$ induces a connected graph and $w$ is adjacent to $x \in V-S$.  Therefore, $S'$ is a $k$-forcing set such that the subgraph induced by $V-S'$ is connected -- which is a contradiction since $S$ is the smallest such set and $|S'|<|S|$.

Now suppose that no vertex in $A$ is adjacent to any vertex in $B$. Since $G$ is $k$-connected, there must be a vertex in $A$ that has a neighbor in $V-S$. Let $v \in A$ be such a vertex and let $x \in V-S$ be adjacent to $v$. Either all neighbors of $v$ are in $V-S$, or else there is a vertex in $A$ adjacent to $v$. In the first case, notice that $deg(v)<k$.  This is a contradiction since $G$ is $k$-connected and the minimum degree of any $k$-connected graph is at least $k$. In the second case, let $u \in A$ be adjacent to $v$ and consider the set $S'=S-\{v\}$. Observe that $S'$ is a $k$-forcing set since $u$, having at most $k$ neighbors in $V-S'$, will $k$-force $v$ on the first step of the $k$-forcing process applied to $S'$. Hence we will arrive at a colored set of vertices containing all of $S$, which will then $k$-force all of $G$ since $S$ is a $k$-forcing set. Moreover, the subgraph induced by $V-S'$ is connected since $V-S$ induces a connected graph and $v$ is adjacent to $x \in V-S$. Therefore, $S'$ is a $k$-forcing set such that the subgraph induced by $V-S'$ is connected -- which is a contradiction since $S$ is the smallest such set and $|S'|<|S|$. Since we arrive at a contradiction in all cases, the proof is complete.
\end{proof}

\begin{corollary}\label{connected_dom}
Let $k$ be a positive integer and let $G=(V,E)$ be a $k$-connected graph with $n>k$ vertices, $k$-forcing number $F_k(G)$, and connected $k$-domination number $\gamma_{k,c}(G)$. Then,
\[
F_k(G)\leq n-\gamma_{k,c}(G).
\]
\end{corollary}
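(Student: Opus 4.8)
The plan is to derive the corollary directly from Lemma~\ref{lem4}. The key observation is that Lemma~\ref{lem4} produces a connected $k$-dominating set of the form $V-S$, where $S$ is a $k$-forcing set, and the size of such a set is exactly $n - |S|$. So if I can locate a $k$-forcing set $S$ whose complement is both connected and guaranteed to be $k$-dominating, then I will have a connected $k$-dominating set of size $n - |S|$, which bounds $\gamma_{k,c}(G)$ from above by $n - |S|$, and hence $|S| \leq n - \gamma_{k,c}(G)$.

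First I would let $S$ be a smallest $k$-forcing set among all $k$-forcing sets whose complement $V-S$ induces a connected subgraph; such sets exist by the parenthetical remark in Lemma~\ref{lem4} (take any $n-1$ vertices). By Lemma~\ref{lem4}, for this choice of $S$ the set $V-S$ is a connected $k$-dominating set of $G$. Therefore $\gamma_{k,c}(G) \leq |V-S| = n - |S|$, which rearranges to $|S| \leq n - \gamma_{k,c}(G)$.

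The remaining, and really the only, obstacle is to connect this particular constrained $k$-forcing set $S$ back to the genuine $k$-forcing number $F_k(G)$, which is the minimum over \emph{all} $k$-forcing sets, with no connectivity restriction on the complement. Since $S$ is a $k$-forcing set (just one satisfying an extra property), we trivially have $F_k(G) \leq |S|$. Chaining this with the previous inequality gives
\[
F_k(G) \leq |S| \leq n - \gamma_{k,c}(G),
\]
which is exactly the claimed bound. I expect the subtlety a careful reader would want addressed is precisely this point: the $S$ furnished by Lemma~\ref{lem4} need not itself be a minimum $k$-forcing set, so one should not assert $|S| = F_k(G)$; rather the inequality $F_k(G) \leq |S|$ suffices and runs in the correct direction, so no sharper control is needed.
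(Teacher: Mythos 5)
Your proof is correct and follows essentially the same route as the paper: take a smallest $k$-forcing set $S$ whose complement induces a connected subgraph, invoke Lemma~\ref{lem4} to conclude $V-S$ is a connected $k$-dominating set, and chain $F_k(G)\leq |S|$ with $\gamma_{k,c}(G)\leq n-|S|$. The subtlety you flag --- that $S$ need not be a minimum $k$-forcing set, so only the inequality $F_k(G)\leq |S|$ is available --- is handled identically in the paper.
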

\begin{proof}
Let $S$ be a smallest set which is $k$-forcing and such that $V-S$ induces a connected subgraph (that such a set exists can be seen by setting $S$ to be any collection of $n-1$ vertices, in which case $V-S$ is vacuously connected) . Thus, from Lemma \ref{lem4}, $V-S$ is a connected $k$-dominating set of $G$. So, $F_k(G)\le |S|$ and $\gamma_{k,c}(G) \le |V-S| = n -|S|$. From these, after rearranging, we deduce our result, $F_k(G) \le |S| \leq n - \gamma_{k,c}(G)$.
\end{proof}

From this, we deduce the very appealing result below which relates the zero forcing number to the connected domination number, both NP-hard graph invariants, in an incredibly simple way. From it, any lower bound on the connected domination number will give an upper bound on the zero forcing number, and any lower bound on the zero forcing number will give an upper bound on the connected domination number.  Since connected domination number has been more widely studied, this gives the researchers on zero forcing number new avenues to pursue.

\begin{corollary}\label{connected_dom2}
Let $G=(V,E)$ be a connected graph with $n\geq 2$, connected domination number $\gamma_c(G)$ and zero forcing number $Z(G)=F_1(G)$. Then,
\[
Z(G)=F_1(G) \leq n - \gamma_c(G),
\]
and this inequality is sharp.
\end{corollary}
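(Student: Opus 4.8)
The plan is to derive this statement as the $k=1$ specialization of Corollary~\ref{connected_dom}, which already does all of the heavy lifting. First I would check that the hypotheses line up exactly when $k=1$. By the paper's definition, a graph is $1$-connected precisely when it has more than one vertex and remains connected after removing the empty set of vertices; that is, $1$-connectedness is the same as being connected with $n \geq 2$, which is exactly the hypothesis of the present corollary. Similarly, a $1$-dominating set is just an ordinary dominating set, so the connected $1$-domination number $\gamma_{1,c}(G)$ coincides with the connected domination number $\gamma_c(G)$. With these identifications in place, applying Corollary~\ref{connected_dom} with $k=1$ gives
\[
F_1(G) \leq n - \gamma_{1,c}(G) = n - \gamma_c(G),
\]
which is the desired inequality.

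It then remains to verify sharpness, and for this I would exhibit a single clean family attaining equality. The complete graph $K_n$ is a natural candidate: by the earlier observation on complete graphs we have $F_1(K_n) = n-1$, while any single vertex dominates $K_n$ and trivially induces a connected subgraph, so $\gamma_c(K_n) = 1$. Hence $n - \gamma_c(K_n) = n - 1 = F_1(K_n)$, witnessing equality. (Any graph for which the unique smallest connected dominating set is the complement of a minimum zero forcing set would serve equally well, but $K_n$ keeps both quantities transparent.)

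I do not expect any genuine obstacle here: the inequality is immediate once Corollary~\ref{connected_dom} is invoked. The only point meriting a moment's care is confirming that ``$1$-connected with $n>1$'' is literally the same as ``connected with $n \geq 2$,'' so that Corollary~\ref{connected_dom} applies with no hidden edge cases, and then choosing a sharpness example whose zero forcing number and connected domination number are both easy to read off.
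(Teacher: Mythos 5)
Your proposal is correct and follows essentially the same route as the paper: specialize Corollary~\ref{connected_dom} to $k=1$ (the paper's proof does exactly this, and your careful check that ``$1$-connected'' matches ``connected with $n\geq 2$'' and that $\gamma_{1,c}=\gamma_c$ just makes explicit what the paper leaves implicit) and then exhibit a sharpness example. The paper cites both complete graphs and cycles as equality cases, while you verify only $K_n$, but a single family suffices and your verification of it is correct.
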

\begin{proof}
Let $k=1$ in Corollary \ref{connected_dom} to get the bound. To see that the bound is sharp, consider complete graphs and cycles.
\end{proof}

\begin{theorem}\label{main2}
Let $k$ be a positive integer and let $G=(V,E)$ be a $k$-connected graph with $n>k$ vertices and $\Delta \ge 2$. Then,
\[
F_k(G) \leq \frac{(\Delta-2)n+2}{\Delta+k-2},
\]
and this inequality is sharp.
\end{theorem}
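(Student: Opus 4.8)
The plan is to route the bound through the connected $k$-domination inequality just established. Since $G$ is $k$-connected with $n>k$, Corollary \ref{connected_dom} gives $F_k(G)\le n-\gamma_{k,c}(G)$, so it suffices to prove a lower bound $\gamma_{k,c}(G)\ge \frac{kn-2}{\Delta+k-2}$ on the connected $k$-domination number; substituting this into the corollary will immediately yield the claimed bound after one line of algebra.

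To get that lower bound I would fix a minimum connected $k$-dominating set $D$, so $|D|=\gamma_{k,c}(G)$, and double count the edges $e(D,V-D)$ leaving $D$. On one side, every vertex of $V-D$ has at least $k$ neighbors in $D$ by $k$-domination, so $e(D,V-D)\ge k(n-\gamma_{k,c}(G))$; this uses $V-D\neq\emptyset$, which holds because $\gamma_{k,c}(G)\le n-F_k(G)\le n-1$ by Corollary \ref{connected_dom}. On the other side, the subgraph induced by $D$ is connected and hence contains at least $\gamma_{k,c}(G)-1$ edges, while each vertex has degree at most $\Delta$; summing degrees over $D$ and subtracting twice the internal edges gives $e(D,V-D)\le \Delta\gamma_{k,c}(G)-2(\gamma_{k,c}(G)-1)=(\Delta-2)\gamma_{k,c}(G)+2$. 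This is precisely the step where the hypothesis $\Delta\ge 2$ matters, since it guarantees that discarding the internal edges does not increase the estimate.

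Chaining the two estimates yields $k(n-\gamma_{k,c}(G))\le (\Delta-2)\gamma_{k,c}(G)+2$, which rearranges to $\gamma_{k,c}(G)\ge \frac{kn-2}{\Delta+k-2}$ (the denominator is at least $1$ since $\Delta\ge 2$ and $k\ge 1$). Feeding this back into $F_k(G)\le n-\gamma_{k,c}(G)$ and simplifying gives exactly $\frac{(\Delta-2)n+2}{\Delta+k-2}$. For sharpness I would exhibit the cycles $C_n$: here $\Delta=2$, the bound collapses to $\frac{2}{k}$, and one verifies $F_1(C_n)=2$ and $F_2(C_n)=1$, meeting it for $k=1,2$.

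The hard part is not the algebra but recognizing the right edge count: the only way to beat the crude bound $e(D,V-D)\le \Delta\gamma_{k,c}(G)$ is to use that $D$ is \emph{connected}, which forces at least $\gamma_{k,c}(G)-1$ internal edges and thereby improves the estimate to $(\Delta-2)\gamma_{k,c}(G)+2$. Beyond that, the remaining care is with the boundary conditions, namely ensuring $V-D\neq\emptyset$ and positivity of the denominator, both of which are controlled by the hypotheses $n>k$ and $\Delta\ge 2$ together with Corollary \ref{connected_dom}.
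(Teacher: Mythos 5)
Your proposal is correct and is essentially the paper's own argument: the paper takes a smallest $k$-forcing set $S$ whose complement is connected, notes via Lemma \ref{lem4} that $Q=V-S$ is a connected $k$-dominating set, and then performs exactly your double count across the cut ($e(S,Q)\geq k|S|$ from $k$-domination, and $e(S,Q)\leq \Delta|Q|-2(|Q|-1)$ from connectivity of the induced subgraph $[Q]$), with $K_{\Delta+1}$ for $k=1,2$ in place of your (equally valid) cycles for sharpness. Your only repackaging is to run the count on an arbitrary minimum connected $k$-dominating set and feed the resulting bound $\gamma_{k,c}(G)\geq \frac{kn-2}{\Delta+k-2}$ into Corollary \ref{connected_dom} --- a pleasant byproduct that generalizes the paper's later remark $\gamma_c(G)\geq \frac{n-2}{\Delta-1}$ to all $k$, but the underlying inequality chain is identical.
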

\begin{proof}
Let $S$ be a smallest set which is $k$-forcing and such that $V-S$ induces a connected subgraph. Thus, from Lemma \ref{lem4}, $Q=V-S$ is a connected $k$-dominating set. We will denote the number of edges between $S$ and $Q$ by $e(S,Q)$.  Since $Q$ is $k$-dominating, each vertex in $S$ is adjacent to at least $k$ vertices of $Q$. This gives us the following lower bound on $e(S,Q)$.
\begin{equation}\label{L}
e(S,Q) \geq k|S|
\end{equation}
Now, let $v \in Q$ and denote by $deg_Q(v)$ the number of neighbors $v$ has in $Q$. With this notation, $deg(v)-deg_Q(v)$ is the number of neighbors $v$ has in $S$. Consequently, summing over all the vertices in $Q$,
\begin{equation}\label{U}
e(S,Q) = \sum_{v\in Q} (deg(v)-deg_Q(v)) = \sum_{v\in Q}deg(v) - \sum_{v\in Q}deg_Q(v).
\end{equation}
Let $[Q]$ denote the subgraph induced by $Q$, and note that, since $[Q]$ is connected, $m([Q]) \geq |Q| -1$.  Furthermore,
\[
\sum_{v\in Q}deg_Q(v) =2m([Q]) \geq 2|Q|-2.
\]
Combining this with Inequality (\ref{U}), 
\[
\sum_{v\in Q}deg(v)-\sum_{v\in Q}deg_Q(v) \leq \Delta |Q| - 2|Q| + 2,
\]
which yields the following upper bound on $e(V,Q)$;
\[
e(S,Q) \leq \Delta |Q| - 2|Q|+2.
\]
Finally, we combine this with the lower bound from Inequality (\ref{L}), and substitute $|Q|=n-|S|$, to get:
\[
k|S| \leq (\Delta -2)(n-|S|)+2.
\]
This inequality can be solved for $|S|$, resulting in,
\[
|S| \leq \frac{(\Delta -2)n+2}{\Delta +k-2}.
\] 
Hence, because $F_k(G) \leq |S|$, the inequality is proven. To see that the inequality is sharp, consider a complete graph $K_{\Delta +1}$ and $k=1$ or $k=2$. 
\end{proof}

\begin{remark}
Theorem \ref{main2} is an improvement over Theorem \ref{main} and Corollary \ref{kcor} when the graph is $k$-connected, $\delta \geq k$, and $k \leq 2$.  However, when $k \geq 3$, even with the $k$-connected condition, Theorem \ref{main} is superior. 
\end{remark}

The corollary below has the nice special case that for connected graphs with $\Delta=3$, we have $Z(G)=F_1(G)\leq \frac{n}{2}+1$.

\begin{corollary}\label{cor3}
Let $G=(V,E)$ be a connected graph with $\Delta \geq 2$. Then,
\[
Z(G)=F_1(G) \leq \frac{(\Delta -2)n+2}{\Delta -1},
\]
and this inequality is sharp.
\end{corollary}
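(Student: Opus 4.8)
The plan is to obtain this corollary as the direct specialization of Theorem~\ref{main2} to the case $k=1$. First I would check that the hypotheses match. By the definition of $k$-connectedness given just before Lemma~\ref{lem4}, a $1$-connected graph is required to have more than $k=1$ vertices and to remain connected after deleting any set of fewer than $k=1$ vertices---i.e.\ after deleting the empty set---which says exactly that $G$ is connected. Since $\Delta \ge 2$ already forces $n \ge 3 > 1$, every graph satisfying the hypotheses of Corollary~\ref{cor3} is a $1$-connected graph on more than one vertex, so Theorem~\ref{main2} applies with $k=1$.

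With the hypotheses verified, I would simply substitute $k=1$ into the bound of Theorem~\ref{main2}. The denominator $\Delta+k-2$ collapses to $\Delta-1$ while the numerator $(\Delta-2)n+2$ is untouched, producing
\[
Z(G)=F_1(G) \leq \frac{(\Delta-2)n+2}{\Delta-1},
\]
which is the claimed inequality. No fresh combinatorial reasoning is needed; the substance of the argument is inherited entirely from the connected $k$-domination estimate used to prove Theorem~\ref{main2}, by way of Lemma~\ref{lem4}.

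For sharpness I would exhibit $G=K_{\Delta+1}$, which satisfies the hypotheses for every $\Delta\geq 2$. Here $n=\Delta+1$, and the earlier observation that $F_1(K_n)=n-1$ gives $F_1(G)=\Delta$, while the right-hand side evaluates to $\frac{(\Delta-2)(\Delta+1)+2}{\Delta-1}=\frac{\Delta(\Delta-1)}{\Delta-1}=\Delta$; hence equality holds. To see that the bound is also tight for arbitrarily large orders, I would additionally note the cycles $C_n$, for which $\Delta=2$ and $F_1(C_n)=2=\frac{0\cdot n+2}{1}$, so equality holds there as well.

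I do not anticipate any genuine obstacle, since the inequality itself is a single substitution into Theorem~\ref{main2}. The only items requiring attention are the purely definitional bookkeeping that matches ``$1$-connected'' to ``connected'' and the routine verification of the two sharpness examples; both are immediate.
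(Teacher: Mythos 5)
Your proposal is correct and follows essentially the same route as the paper: substitute $k=1$ into Theorem~\ref{main2} (whose hypotheses reduce, as you note, to connectivity plus $\Delta\geq 2$), and then verify sharpness on explicit families. Your sharpness examples ($K_{\Delta+1}$ and cycles, both checked correctly) differ only cosmetically from the paper's choice of complete graphs and balanced complete bipartite graphs.
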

\begin{proof}
For the inequality, set $k=1$ in Theorem \ref{main2}. To see that equality can be achieved, consider complete graphs and complete bipartite graphs with equal parts.
\end{proof}

The next theorem combines Corollary \ref{connected_dom2} and Corollary \ref{cor3} into one result. It shows that Corollary \ref{connected_dom2} is always as good as Corollary \ref{cor3}, but Corollary \ref{cor3} gives an efficiently computable upper bound on the zero forcing number while Corollary \ref{connected_dom2} involves an NP-hard graph invariant, $\gamma_c(G)$. 

\begin{theorem}
Let $G=(V,E)$ be a connected graph with $\Delta \geq 2$. Then,
\[
Z(G) = F_1(G) \leq n- \gamma_c(G) \leq \frac{(\Delta-2)n+2}{\Delta -1}.
\]
\end{theorem}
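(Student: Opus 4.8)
The plan is to prove the two inequalities in the chain separately, with the left one being essentially free and the right one requiring the actual work. The statement to prove is $F_1(G) \leq n - \gamma_c(G) \leq \frac{(\Delta-2)n+2}{\Delta-1}$ for connected $G$ with $\Delta \geq 2$. The leftmost inequality $F_1(G) \leq n - \gamma_c(G)$ is exactly Corollary \ref{connected_dom2}, so I would simply cite it; since $G$ is connected it is $1$-connected and $n \geq 2 > 1 = k$, so the hypotheses are met.

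The real content is the middle inequality $n - \gamma_c(G) \leq \frac{(\Delta-2)n+2}{\Delta-1}$, which I would establish by proving the equivalent lower bound $\gamma_c(G) \geq \frac{n-2}{\Delta-1}$ on the connected domination number. The approach is to take a minimum connected dominating set $D$ with $|D| = \gamma_c(G)$ and count edges between $D$ and $Q = V - D$, mirroring the edge-counting in the proof of Theorem \ref{main2}. Each vertex of $Q$ has at least one neighbor in $D$ (domination), so $e(D,Q) \geq |Q| = n - \gamma_c$. On the other hand, each vertex of $D$ sends at most $\Delta - \deg_D(v)$ edges to $Q$, and since $[D]$ is connected we have $\sum_{v \in D} \deg_D(v) = 2m([D]) \geq 2(|D|-1)$; this gives $e(D,Q) \leq \Delta|D| - 2(|D|-1) = (\Delta-2)\gamma_c + 2$. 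Chaining these yields $n - \gamma_c \leq (\Delta-2)\gamma_c + 2$, which rearranges directly to the desired bound after solving for $\gamma_c$ and substituting back.

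I expect the main obstacle to be purely bookkeeping rather than conceptual: verifying that the edge-counting argument for connected domination is genuinely independent of the $k$-forcing machinery, so the chain is not circular. One must be careful that the middle inequality is a statement about $\gamma_c$ alone and does not secretly reuse the conclusion of Theorem \ref{main2}. I would therefore present the middle step as a self-contained claim about any connected graph, then observe that it is exactly the $k=1$ specialization of the edge count already carried out in the proof of Theorem \ref{main2} (with $Q = V-S$ there replaced by the connected dominating set here). The cleanest exposition is to note that the right-hand inequality is equivalent to Corollary \ref{cor3} combined with $F_1(G) \leq n - \gamma_c(G)$: indeed Corollary \ref{cor3} already gives $F_1(G) \leq \frac{(\Delta-2)n+2}{\Delta-1}$, and the new content is only that $n - \gamma_c(G)$ is sandwiched between these two quantities.

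In writing the final version I would keep it to a few lines: cite Corollary \ref{connected_dom2} for the first inequality, then give the short edge-counting derivation above for the second, and close by remarking that both bounds agree with and refine Corollary \ref{cor3}. No delicate estimates or case analysis are needed, so the proof should be significantly shorter than that of Theorem \ref{main2}.
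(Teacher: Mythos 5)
Your proof is correct, but it takes a genuinely different route from the paper for the middle inequality. The paper invokes the identity $\gamma_c(G) = n - L$, where $L$ is the maximum number of leaves over all spanning trees (citing Caro--West--Yuster), and then bounds $L$ by counting degrees in a maximum-leaf spanning tree $T$: writing $n_i$ for the number of vertices of degree $i$ in $T$ and using $2m(T) = n-1$ it derives $L(D-1) + 2n - 2 \leq Dn$ and solves for $L$. You instead work directly with a minimum connected dominating set $D$: domination gives $e(D,Q) \geq |Q| = n - \gamma_c$, while connectivity of $[D]$ gives $e(D,Q) \leq \Delta|D| - 2m([D]) \leq (\Delta-2)\gamma_c + 2$, and chaining yields $\gamma_c \geq \frac{n-2}{\Delta-1}$, which is exactly the bound the paper only states afterwards as a remark. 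Your argument is more self-contained (it avoids the max-leaf spanning tree theorem entirely) and reuses the edge-counting template of Theorem \ref{main2}; the paper's argument buys the explicit link to maximum-leaf spanning trees, a well-studied parameter. One small caution: your side remark that the middle inequality is ``equivalent to Corollary \ref{cor3} combined with $F_1(G) \leq n - \gamma_c(G)$'' is not right --- those two upper bounds on $F_1$ say nothing about how $n - \gamma_c(G)$ compares to $\frac{(\Delta-2)n+2}{\Delta-1}$ --- but since your actual proof is the self-contained edge count, this slip does not affect correctness.
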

\begin{proof}
Let $T$ be a spanning tree of $G$ with the maximum number of leaves, and denote the number of leaves in $T$ with $L$. It is well known that $\gamma_c(G)=n-L$, see \cite{CaroWestYuster}, so that $n-\gamma_c(G)=L$. Now, let $D \leq \Delta$ be the maximum degree of $T$, and let $n_i$ be the number of vertices in $T$ of degree $i$. Since $m(T)=n-1$, $L=n_1$, $n=n_1+n_2+\ldots +n_D$, and
\[
2m(T) = \sum_{i=1}^D in_i = n_1 + \sum_{i=2}^D in_i,
\]
we deduce the following.
\begin{equation}\label{C}
L(D-1) + 2n -2 = L(D-1) + 2m(T) = Dn_1 - n_1 + n_1 + \sum_{i=2}^D in_i \leq Dn. 
\end{equation}
Comparing the left- and right-most expressions in the above inequality, $L(D-1)+2n-2 \leq Dn$, and solving for $L$ (noting that $D \geq 2$, since the condition $\Delta \geq 2$ implies $n \geq 3$, and a spanning tree on at least three vertices has maximum degree at least two), we get:
\[
L \leq \frac{(D-2)n+2}{D-1} \leq \frac{(\Delta-2)n+2}{\Delta -1},
\]
by monotonicity, which completes the proof.
\end{proof}

\begin{remark}
Of course, this result can be thought of as a lower bound for connected domination number. Namely, if $G=(V,E)$ is a connected graph with $\Delta \geq 2$, then,
\[
\gamma_c(G) \geq \frac{n-2}{\Delta -1}.
\]
This inequality is sharp since equality is achieved for paths, cycles, and any graph with $\Delta = n-1$.
\end{remark}

\section{Upper bounds on $F_k(G)$ for special families of graphs}

\subsection{Graphs that are Hamiltonian and Cycle-Trees}

First, for the case $k=1$, we give an upper bound on the zero forcing number $Z(G)=F_1(G)$ when $G$ is Hamiltonian, where a Hamiltonian graph is a graph which has a cycle containing all the vertices. Given a Hamiltonian graph $G$ with a Hamiltonian cycle $C$, a chord is any edge not on $C$. Thus, with these definitions, Hamiltonian graphs with no chords are isomorphic to $C_n$ -- prompting us to focus on those Hamiltonian graphs with at least one chord.
 
\begin{theorem}
Let $G=(V,E)$ be a Hamiltonian graph with $t\geq 1$ chords and $n \geq 4$. Then,
\[
F_{1}(G)\leq t+1.
\]
\end{theorem}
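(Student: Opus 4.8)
The plan is to prove the bound by induction on the number of chords $t$, using Lemma \ref{spread} to control how much a single chord can raise the zero forcing number. The naive version of this idea — start from the Hamiltonian cycle $C_n$, where $F_1(C_n)=2$, and re-insert the $t$ chords one at a time, each raising $F_1$ by at most $1$ — only yields $F_1(G)\le t+2$. The key to saving the extra unit is to anchor the induction not at $t=0$ (where the cycle already costs $2$ but $t+1=1$) but at $t=1$, proving directly that a Hamiltonian graph with exactly one chord still has $F_1\le 2$.

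For the base case $t=1$, I would fix a Hamiltonian cycle and label its vertices $v_1,\dots,v_n$ cyclically so that the unique chord is incident to $v_1$, say it joins $v_1$ and $v_j$ with $3\le j\le n-1$ (this range is nonempty precisely because $n\ge 4$). I claim $\{v_1,v_2\}$ is a zero forcing set. After coloring $v_1$ and $v_2$, the vertex $v_2$ has $v_3$ as its only uncolored neighbor and forces it; and once $v_{i-1}$ and $v_i$ are colored, the only uncolored neighbor of $v_i$ is $v_{i+1}$ — this remains true even when $i=j$, because then the chord-neighbor $v_1$ of $v_j$ has already been colored — so $v_i$ forces $v_{i+1}$. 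Propagating around the cycle colors $v_3,\dots,v_n$ in turn, giving $F_1(G)\le 2=t+1$.

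For the inductive step, suppose $t\ge 2$ and the statement holds for all Hamiltonian graphs on at least $4$ vertices with fewer chords. Given $G$ with $t$ chords, delete one chord $e$; since $e$ is not on the Hamiltonian cycle, $G-e$ retains that cycle and has exactly $t-1\ge 1$ chords, so the induction hypothesis gives $F_1(G-e)\le (t-1)+1=t$. The left-hand inequality of Lemma \ref{spread}(2), namely $F_1(G)\le F_1(G-e)+1$, then yields $F_1(G)\le t+1$, as required.

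The main obstacle is entirely the base case: the edge-deletion machinery of Lemma \ref{spread} is exactly what introduces the off-by-one in the crude count, and the only way to recover the sharp constant $t+1$ is to verify by hand that a single chord cannot push the forcing number of a cycle above $2$. The care needed there lies in checking that propagation is never blocked at a chord endpoint — which is why one routes the initial pair $\{v_1,v_2\}$ so that the forcing wave reaches the chord endpoint $v_j$ only after its far end $v_1$ is already colored.
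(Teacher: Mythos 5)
Your proposal is correct and follows essentially the same route as the paper: induction on $t$ anchored at the base case $t=1$, where a chord endpoint together with an adjacent degree-$2$ vertex is shown to force the whole cycle, and an inductive step that deletes a chord and applies Lemma \ref{spread}(2). Your base case is just a more explicitly labeled version of the paper's argument (the paper likewise colors ``one vertex from the chord and a degree $2$ vertex adjacent to it''), so the two proofs coincide in substance.
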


\begin{proof}
We will proceed by induction on $t$. If $t=1$, take one vertex from the chord and a degree $2$ vertex adjacent to it, and color those two vertices.  The degree $2$ vertex will force all degree $2$ vertices around the cycle until reaching the second vertex of the chord, which then has at most one non-colored neighbor, and so forces all the remaining vertices. Hence, $Z(G)=F_{1}(G)=2=t+1$ -- settling the base case. Assume that the theorem is true for all Hamiltonian graphs with $t \geq 1$ chords, and let $G$ be a Hamiltonian graph with $t+1$ chords. Consider $G-e$, where $e$ is a chord of $G$. Since, $G-e$ is still Hamiltonian and has $t$ chords, $F_1(G-e) \leq t +1$ by inductive assumption.  Using Lemma \ref{spread}, $F_{1}(G)-1\leq F_{1}(G-e)\leq t+1$. Hence, $F_{1}(G)\leq (t+1)+1$, and the general result follows by induction.
\end{proof}

This result is an improvement over Corollary \ref{cor3} when $\Delta=3$, as seen below.

\begin{corollary}
Let $G=(V,E)$ be a Hamiltonian graph on $n$ vertices such that $\Delta(G)=3$ and suppose $G$ has $n_{3} \geq 2$ vertices of degree $3$. Then,
\[
F_{1}(G)\leq \frac{n_{3}}{2}+1\leq \frac{n}{2}+1.
\]
\end{corollary}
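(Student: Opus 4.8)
The plan is to combine the Hamiltonian chord bound with a counting argument on the number of degree-$3$ vertices. First I would establish the relationship between the number of chords $t$ and the number of degree-$3$ vertices $n_3$. In a Hamiltonian graph with $\Delta=3$, every vertex has degree either $2$ or $3$, and the Hamiltonian cycle $C$ already uses two edges at each vertex. A chord is an edge not on $C$; since each endpoint of a chord must accommodate this extra edge, each endpoint necessarily has degree $3$. Conversely, each degree-$3$ vertex contributes exactly one edge beyond the two cycle-edges, namely a chord-endpoint. Therefore counting chord-endpoints two ways, each chord has exactly two endpoints and each degree-$3$ vertex is the endpoint of exactly one chord, yielding $2t = n_3$, i.e. $t = n_3/2$.

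Once this relationship is in hand, the first inequality follows immediately from the preceding theorem: since $G$ is Hamiltonian with $t = n_3/2 \geq 1$ chords (using $n_3 \geq 2$), that theorem gives
\[
F_1(G) \leq t + 1 = \frac{n_3}{2} + 1.
\]
The second inequality, $\frac{n_3}{2} + 1 \leq \frac{n}{2} + 1$, is then trivial because $n_3 \leq n$, as the degree-$3$ vertices form a subset of all $n$ vertices.

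The main obstacle will be verifying the clean identity $t = n_3/2$ rigorously, and in particular confirming that no degree-$3$ vertex can be the endpoint of more than one chord and that chords cannot share the cycle-edge structure in a way that breaks the count. I would want to double-check the edge-degree bookkeeping: with the Hamiltonian cycle contributing exactly two to each vertex's degree, a degree-$3$ vertex has exactly one incident non-cycle edge, and a degree-$2$ vertex has none. Since every chord is incident to two vertices each of which must then have degree $3$, the bipartite-style double count $\sum_{v} (\deg(v) - 2) = 2t$ gives $n_3 = 2t$ directly (degree-$2$ vertices contribute zero to the sum). This is the only genuinely substantive step; the rest is immediate from the theorem and the obvious bound $n_3 \leq n$.
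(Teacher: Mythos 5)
Your proposal is correct and follows exactly the paper's route: the paper's entire proof is the observation that $t=\frac{n_3}{2}\leq\frac{n}{2}$, after which the preceding theorem gives $F_1(G)\leq t+1$. Your double-count $\sum_{v}(\deg(v)-2)=2t=n_3$ simply makes rigorous the identity the paper states without proof, so there is nothing to add.
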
 
\begin{proof}
Observe that the number of chords $t$ satisfies, $t=\frac{n_{3}}{2}\leq \frac{n}{2}$.
\end{proof}

Next we present a related idea for graphs that have $2$-factors, where a $k$-factor is a $k$-regular spanning subgraph. This includes some Hamiltonian graphs, slightly expanding our scope. We need the following definition. 

\begin{definition}
A connected graph $G=(V,E)$ will be called a \emph{cycle-tree} if it consists of $q$ vertex disjoint cycles that are connected by $q-1$ edges. Thus, a cycle-tree with $n$ vertices will have $m=n+q-1$ edges and each edge between two cycles is a cut-edge. %$G$ will be called a \emph{3-tree-Cycle} if $\Delta(G)\leq 3$.
\end{definition}

\begin{theorem}\label{cycle-tree}
Let $G=(V,E)$ be a cycle-tree with $q \geq 1$ cycles. Then,
\[
Z(G)=F_{1}(G)\leq 2q.
\]
\end{theorem}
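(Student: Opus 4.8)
The plan is to induct on the number of cycles $q$. The base case $q=1$ is immediate, since then $G$ is a single cycle $C_n$ and we already know that $Z(C_n)=F_1(C_n)=2=2q$. For the inductive step, I would exploit the fact that contracting each cycle to a single node turns $G$ into a tree $T$ on $q$ nodes whose edges are the $q-1$ cut-edges. When $q\geq 2$, this tree has a leaf, which corresponds to a \emph{leaf cycle} $C$ joined to the rest of $G$ by a single cut-edge $e=uv$ with $u\in C$ and $v\notin C$. Deleting $V(C)$ from $G$ yields a graph $G'$ which is again a cycle-tree, now with $q-1$ cycles, so by the inductive hypothesis $F_1(G')\leq 2(q-1)$.

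Next I would take a minimum zero forcing set $S'$ of $G'$ and set $S=S'\cup\{u,w\}$, where $w$ is a cycle-neighbor of $u$ in $C$; thus $|S|\leq 2(q-1)+2=2q$, and it remains to verify that $S$ is a zero forcing set of $G$. The key structural observation is that, because $C$ is a leaf cycle, $u$ is the only vertex of $C$ incident with a cut-edge, so every other vertex of $C$ has degree exactly $2$. Starting from the colored pair $u,w$, the vertex $w$ has its colored neighbor $u$ together with a single uncolored neighbor, so it forces around $C$ without stalling (each intermediate vertex has degree $2$), eventually coloring all of $C$. Once $C$ is fully colored, $u$ has both of its cycle-neighbors colored, leaving $v$ as its only possibly-uncolored neighbor, so $u$ forces $v$. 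Finally, since $S'\subseteq S$ and zero forcing is monotone, $S'$ forces all of $G'$; this remains valid inside $G$ because the only edge of $G$ not present in $G'$ that touches a vertex of $G'$ is $uv$, and its endpoint $u$ is already colored, so it never obstructs a force performed within $G'$. Hence all of $V(G)=V(C)\cup V(G')$ becomes colored and $F_1(G)\leq 2q$.

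The step I expect to require the most care is the last one: justifying that the forcing confined to $C$ and the forcing on $G'$ can be run one after the other inside $G$ without interfering. The clean way to argue this is to note that a force that is legal in $G'$ remains legal in $G$ as soon as $V(C)\cup\{u\}$ is colored, precisely because the single extra incidence (the cut-edge at $v$) has its far endpoint $u$ already colored; one should also confirm the harmless degenerate possibilities (for instance that $v$ may already lie in $S'$, which only helps). Everything else — existence of a leaf cycle, the degree-$2$ claim inside $C$, and the arithmetic $2(q-1)+2=2q$ — is routine.
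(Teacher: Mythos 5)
Your proposal is correct and follows essentially the same argument as the paper: induction on $q$, removing an end-cycle $C$ attached by a single cut-edge $uv$, applying the inductive hypothesis to the remaining cycle-tree, and augmenting its minimum forcing set by $\{u,w\}$ for a cycle-neighbor $w$ of $u$. Your write-up is, if anything, slightly more careful than the paper's about the order of forces and why the process on $G-V(C)$ transfers to $G$ (the paper simply notes the process ``is not affected'' and splits on whether $v$ lies in the inductive forcing set), but the underlying idea is identical.
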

\begin{proof}
Proceeding by induction, let $q=1$. In this case, $G$ is a cycle and hence satisfies, $Z(G)=F_1(G)=2=2q$, which settles the base case. Now assume the theorem is true for all cycle-trees with less than $q \geq 2$ cycles. Let $C$ be an end-cycle, that is a cycle connected to the rest of the graph by a unique edge, $e=\{u,v\}$, where $u \in C$ and $v \in V-C$ (by end-cycle we mean the analog of a leaf in a tree). Now, the subgraph induced by $V-C$, which we will denote by $[V-C]$, is a cycle-tree with $q-1<q$ cycles. Therefore, $[V-C]$ satisfies the inequality by inductive assumption. This gives,
\[
Z([V-C])=F_1([V-C])\leq 2(q-1) = 2q-2.
\]
Let $S$ be a minimum $1$-forcing set of $[V-C]$, let $w$ be a neighbor of $u$ on $C$, and consider the set $T=S \cup \{u,w\}$. Now, if $v \in S$, then since $\{u,w\}$ clearly $1$-forces all of $C$, and the $1$-forcing process in $[V-C]$ is not affected, $T$ is a $1$-forcing set of $G$. On the other hand, if $v \notin S$, $v$ is $1$-forced by one of its neighbors in $[V-C]$, leading to the same conclusion. Hence, $T$ is a $1$-forcing set of $G$. Finally, 
\[
Z(G)=F_1(G)\leq |T| = |S \cup \{u,w\}|=|S|+|\{u,w\}|=F_1([V-C])+2 = 2q,
\]
and the general result follows by induction.
\end{proof} 

\begin{remark}
If $\Delta=3$ and $n \geq 4q-2$, this result is an improvement over Corollary \ref{cor3}. For example, for cycle-trees with $\Delta=3$ that are triangle-free (or even for those with at most $2$ triangles), Theorem \ref{cycle-tree} is at least as good of a bound as Corollary \ref{cor3} and Corollary \ref{ratio}, since:
\[
Z(G)=F_1(G) \leq 2q \leq \frac{n}{2}+1 = \frac{(\Delta-2)n+2}{\Delta -1} \leq \frac{\Delta}{\Delta+1}n = \frac{3n}{4}.
\]
\end{remark}

\subsection{Trees}

Some of our results can be slightly improved by restricting our attention to trees. First, observe that for trees it is well known that $Z(T)=F_{1}(T)=P(T)$, where $P(T)$ is the path-cover number of $T$, that is the minimum number of vertex disjoint induced paths that cover all vertices of $T$ \cite{Eroh}. The result below shows that the zero forcing number for trees can be bounded above and below by linear functions of the number of leaves.

\begin{theorem}
Let $T$ be a tree with $n\geq 2$ and $n_1$ leaves. Then,
\[
\lceil\frac{n_{1}}{2}\rceil\leq P(T)=F_{1}(T)\leq n_{1}-1,
\]
and both extremes are sharp.
\end{theorem}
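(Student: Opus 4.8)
The plan is to use the stated identity $F_1(T)=P(T)$ and bound the path-cover number $P(T)$ from both sides, handling the two inequalities by entirely different methods.

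For the lower bound I would argue by counting leaves against path-endpoints. In any cover of $T$ by vertex-disjoint induced paths, every leaf $\ell$ must be an \emph{endpoint} of the path containing it: a leaf has degree $1$ in $T$, so it cannot have the two path-neighbors that an interior vertex of a path requires. Since each induced path has at most two endpoints, a cover using $p$ paths accounts for at most $2p$ leaves, giving $n_1\le 2p$ and hence $p\ge n_1/2$. As $P(T)$ is an integer this yields $P(T)\ge\lceil n_1/2\rceil$.

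For the upper bound I would induct on the number of leaves $n_1$. The base case $n_1=2$ forces $T$ to be a path, covered by a single induced path, matching $n_1-1=1$. For the inductive step with $n_1\ge 3$, note that $T$ is not a path and therefore has a branch vertex (degree $\ge 3$). Fix any leaf $u$ and walk inward along the unique path leaving $u$; the first vertex $v$ of degree $\ne 2$ encountered must be a branch vertex, since if it were another leaf then $T$ would itself be a path, contradicting $n_1\ge 3$. The vertices from $u$ up to (but excluding) $v$ form a pendant path $P_u$; I delete $V(P_u)$ to obtain $T'=T-V(P_u)$. Then $T'$ is a tree, the leaf $u$ is gone, and $v$ drops to degree $\ge 2$ so it does not become a new leaf while no other degree changes; hence $T'$ has exactly $n_1-1\ge 2$ leaves. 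By induction $P(T')\le n_1-2$, and adjoining the induced path $P_u$ (vertex-disjoint from $T'$) to an optimal cover of $T'$ covers all of $T$, so $P(T)\le P(T')+1\le n_1-1$.

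The step requiring the most care is this inductive reduction: I must verify that deleting the pendant path decreases the leaf count by \emph{exactly} one, which is precisely why the first non-degree-$2$ vertex is taken to be a branch vertex, its degree $\ge 3$ guaranteeing that it survives as a non-leaf. Finally, for sharpness I would record two families. The star $K_{1,n_1}$ attains the upper bound, since its center lies on only one path covering at most two leaves, forcing the remaining $n_1-2$ leaves into singleton paths for a total of $n_1-1$. For the lower bound I would take the caterpillar obtained from a path $w_1w_2\cdots w_t$ by attaching two pendant leaves $a_i,b_i$ to each $w_i$: here $n_1=2t$, the $t$ induced paths $a_i\,w_i\,b_i$ cover all vertices, and the lower bound shows no cover can use fewer, so $P(T)=t=\lceil n_1/2\rceil$.
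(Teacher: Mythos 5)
Your proof is correct, and it is worth noting where it departs from the paper's. The lower bound (each induced path in a cover contains at most two leaves) and both sharpness families (the star and the caterpillar with two pendant leaves per spine vertex) are the same as in the paper. The difference is in the upper bound. The paper also inducts on $n_1$, but splits into two cases: if some support vertex has two adjacent leaves, it deletes one leaf and invokes the vertex-spread result $F_1(T)-1\leq F_1(T-v)$ (Lemma \ref{spread}, imported from Edholm et al.); only in the complementary case does it remove the pendant path from a leaf up to the nearest branch point. Your argument shows this case split is unnecessary: once $n_1\geq 3$, the first vertex of degree $\neq 2$ reached from \emph{any} leaf is automatically a branch vertex (possibly the leaf's own neighbor, in which case the deleted pendant path is a single vertex, exactly the paper's first case), its degree $\geq 3$ guarantees the leaf count drops by exactly one, and an optimal cover of $T'$ extends by the one deleted path. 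What your route buys is a uniform, self-contained induction that reasons purely about path covers, using the identity $F_1(T)=P(T)$ only as a black box and avoiding the zero-forcing spread lemma altogether; the paper's route is slightly shorter in its first case but pays for it by leaning on external zero-forcing machinery.
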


\begin{proof}
For the lower bound, notice that every path in a path cover can occupy at most two leaves, hence $F_{1}(T)=P(T)\geq \lceil \frac{n_{1}}{2}\rceil$. To see that equality can be achieved, attach two leaves to each vertex of a path, and color one vertex from each pair of leaves. These leaves now $1$-force their neighbors on the path, which in turn $1$-force the remaining leaves. Consequently, they form a $1$-forcing set and,
\[
\frac{n_{1}}{2}\geq F_{1}(T)=P(T)\geq \lceil\frac{n_{1}}{2}\rceil=\frac{n_{1}}{2}.
\]

For the upper bound, we will proceed by induction on $n_1$. If $n_{1}=2$, then $T$ is a path and clearly, $F_{1}(T)=P(T)=1=n_{1}-1$, which settles the base case. Assume the theorem is true for all trees with less than $n_1 \geq 3$ leaves. Suppose there is a support vertex $w$ with at least two leaves adjacent to it, and denote by $u$ and $v$ two leaves adjacent to $w$. Let $T'=T-v$ be the tree with $n_1'=n_1-1$ leaves that results from deleting $v$ from $T$. Then, using our inductive assumption and appealing to Lemma \ref{spread},
\[
F_1(T)-1 \leq F_1(T') = P(T') \leq n_{1}'-1=n_1-2. 
\]
From this we deduce that $F_1(T)=P(T)\leq n_1-1$.

On the other hand, suppose there is no vertex with at least two leaves. Together with the fact that $n_{1}\geq 3$, this implies there is a branch point in $T$ (a vertex of $T$ with degree at least three). Let $v$ be a leaf and $z$ the closest branch point to $v$. Let $T'$ be the tree formed by removing all vertices of the path between $v$ and $z$, excluding $z$ itself (this could even be $v$ alone). Since any path cover of $T'$ can be extended to a path cover of $T$ by adding at most one path, we have $F_1(T)=P(T) \leq P(T')+1$.  Combining this with our inductive assumption, and letting $n_1'=n_1-1$ be the number of leaves in $T'$, we complete the proof as follows,
\[
F_1(T)=P(T) \leq P(T')+1 \leq n_1'-1+1=n_1-1.
\]
To see that equality can be achieved, subdivide each edge in a star any number of times (even zero). Now color all but one leaf. Notice that each colored leaf will $1$-force all the vertices along the path to the center of the star, which will then $1$-force the vertices along the uncolored path towards the remaining leaf. For this family of trees, 
\[
F_1(T)=P(T)=n_1 -1.
\]
\end{proof}

We can now get a slight improvement over Corollary \ref{cor3} for trees.

\begin{corollary}
Let $T$ be a tree with $n$ vertices and $\Delta \geq 2$. Then,
\[
F_{1}(T)\leq \frac{(\Delta-2)n+2}{\Delta-1}-1.
\]
\end{corollary}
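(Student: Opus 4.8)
The plan is to combine two facts already in hand: the sharp tree bound $F_1(T)=P(T)\le n_1-1$ proved in the preceding theorem, and the elementary leaf-counting estimate that appeared as Inequality (\ref{C}). The key observation is that the promised improvement of $1$ over Corollary \ref{cor3} comes essentially for free, because the tree inequality $F_1(T)\le n_1-1$ is already one smaller than the generic bound on the number of leaves. So the entire task reduces to showing that the leaf count $n_1$ is itself bounded by $\frac{(\Delta-2)n+2}{\Delta-1}$.

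First I would invoke the theorem just proved to write $F_1(T)=P(T)\le n_1-1$, where $n_1$ is the number of leaves of $T$. This isolates the remaining work as a purely combinatorial bound on $n_1$ in terms of $n$ and $\Delta$.

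Second, I would carry out the degree count. Letting $n_i$ denote the number of vertices of degree $i$, we have $\sum_i n_i=n$ and, since $m(T)=n-1$, also $\sum_i i\,n_i=2n-2$. Separating off the leaves and bounding every remaining degree above by $\Delta$ gives
\[
2n-2=n_1+\sum_{i\ge 2} i\,n_i \le n_1+\Delta(n-n_1),
\]
which rearranges to $n_1(\Delta-1)\le(\Delta-2)n+2$, and hence $n_1\le\frac{(\Delta-2)n+2}{\Delta-1}$; here the hypothesis $\Delta\ge 2$ ensures $\Delta-1\ge 1$, so the division is legitimate. This is exactly Inequality (\ref{C}) specialized to $T$ viewed as its own spanning tree, with $D=\Delta$ and $L=n_1$.

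Finally, chaining the two estimates yields
\[
F_1(T)=P(T)\le n_1-1\le\frac{(\Delta-2)n+2}{\Delta-1}-1,
\]
which is the claim. There is no genuine obstacle in this argument; the only subtlety worth flagging is conceptual, namely that the entire saving of $1$ is supplied by the tight tree identity $F_1(T)\le n_1-1$, and \emph{not} by any sharpening of the leaf bound itself — the quantity $n_1$ coincides with the maximum-leaf spanning-tree value $L$ used in the proof of Corollary \ref{cor3}, so it cannot be improved.
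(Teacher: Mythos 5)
Your proof is correct and takes essentially the same route as the paper: both arguments combine the bound $F_1(T)=P(T)\le n_1-1$ from the preceding theorem with the leaf estimate $n_1\le\frac{(\Delta-2)n+2}{\Delta-1}$, then subtract $1$. The only difference is in how that leaf estimate is justified --- you use the direct degree-sum inequality $2n-2=n_1+\sum_{i\ge 2}i\,n_i\le n_1+\Delta(n-n_1)$, while the paper asserts (without proof) that trees whose degrees are all $1$ or $\Delta$ maximize the leaf count and computes $n_1$ exactly for those extremal trees; if anything, your one-line inequality is the more rigorously argued of the two.
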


\begin{proof}
Among all trees on $n$ vertices, the ones with vertex degrees of $\Delta$ and $1$ only will be the ones with the maximum number of leaves. For these trees, $n_{\Delta}=n-n_1$ and $2n-2=\Delta n_{\Delta}+n_1$, where $n_{\Delta}$ means the number of vertices of degree $\Delta$.  Substituting the first equation into the second and solving for $n_1$, we find that the maximum number of leaves among all trees with $n$ vertices and maximum degree $\Delta$ is $\frac{(\Delta-2)n+2}{\Delta-1}$. Hence, for $T$, which may have fewer leaves,
\[
n_1 \leq \frac{(\Delta-2)n+2}{\Delta-1}.
\]
The result now follows by substitution of this inequality into the upper bound from the previous theorem.
\end{proof}

\subsection{Relationships between $k$-independence number and $k$-forcing number for $K_{1,r}$-free graphs}

A graph is $K_{1,r}$-free when it has no induced subgraph isomorphic to $K_{1,r}$.  When $r=3$, $K_{1,3}$-free graphs are sometimes called claw-free graphs. In this subsection we shall present upper bounds on $k$-forcing number for $K_{1,r}$-free graphs. 
 
\begin{proposition}\label{prop}
Let $G=(V,E)$ be a $K_{1,r}$-free graph with $\delta \geq 1$, where $k$ and $r \geq 3$ are positive integers. If $I$ is a maximum $k$-independent set, then every vertex in $V-I$ has at most $k(r-1)$ neighbors in $I$.
\end{proposition}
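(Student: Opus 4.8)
The plan is to fix an arbitrary vertex $v \in V - I$ and study the induced subgraph $H = G[N(v)\cap I]$ on the set of neighbors of $v$ that lie in $I$. The desired conclusion $|N(v)\cap I| \le k(r-1)$ is then exactly a bound on the number of vertices of $H$, and I would obtain it by controlling two parameters of $H$ separately: its maximum degree and its independence number.

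First I would bound the maximum degree of $H$. Since $H$ is induced on a subset of $I$, every vertex of $H$ has at most as many neighbors inside $H$ as it has inside $G[I]$. Because $I$ is a (maximum) $k$-independent set, the induced subgraph $G[I]$ has every vertex of degree less than $k$, hence at most $k-1$, so $\Delta(H) \le \Delta(G[I]) \le k-1$. Next I would bound the independence number of $H$ using the $K_{1,r}$-free hypothesis: if $H$ contained an independent set $J$ with $|J| = r$, then $J$ would consist of $r$ pairwise non-adjacent neighbors of $v$, so $v$ together with $J$ would induce a copy of $K_{1,r}$ in $G$, contradicting $K_{1,r}$-freeness. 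Therefore $\alpha(H) \le r-1$.

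The final step combines these two facts through the standard covering inequality $|V(H)| \le \chi(H)\,\alpha(H) \le (\Delta(H)+1)\,\alpha(H)$: a greedy proper coloring of $H$ partitions its vertices into at most $\Delta(H)+1 \le k$ color classes, each of which is an independent set and hence has size at most $\alpha(H) \le r-1$. Multiplying the number of classes by the maximum class size gives $|V(H)| \le k(r-1)$, which is precisely the statement that $v$ has at most $k(r-1)$ neighbors in $I$.

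I expect the only real content to be the recognition that the two hypotheses translate cleanly into a maximum-degree bound and an independence-number bound on the single auxiliary graph $H$; once this setup is in place, the covering inequality $|V(H)| \le (\Delta(H)+1)\alpha(H)$ (equivalently, the fact that a graph of maximum degree at most $k-1$ decomposes into at most $k$ independent sets via greedy coloring) does the remaining work. The step deserving the most care is verifying that an independent set of size $r$ inside $N(v)\cap I$ genuinely forces an \emph{induced} $K_{1,r}$ centered at $v$, since this is the only place the $K_{1,r}$-free hypothesis enters.
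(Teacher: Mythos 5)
Your proof is correct and follows essentially the same route as the paper: both bound the maximum degree of the subgraph induced by $N(v)\cap I$ by $k-1$ via $k$-independence, bound its independence number by $r-1$ via $K_{1,r}$-freeness, and combine these with the greedy bound relating order, maximum degree, and independence number. The only difference is presentational: you argue directly using $|V(H)| \le (\Delta(H)+1)\,\alpha(H)$, while the paper argues by contradiction using the equivalent rearrangement $\alpha([S]) \ge |S|/(\Delta([S])+1)$.
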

\begin{proof}
Proceeding by contradiction, let $I$ be a maximum $k$-independent set and assume $u \in V-I$ has at least $k(r-1)+1$ neighbors in $I$. Denote the set of neighbors of $u$ in $I$ by $S$, and note that $|S| \geq k(r-1)+1$. As $S$ is a subset of a $k$-independent set, it is also a $k$-independent set.  Hence, $\Delta ([S]) \leq k-1$, where $[S]$ denotes the subgraph induced by the vertices in $S$. Now, using a well known lower bound on independence number, we combine these ideas to get:
\[
\alpha ([S]) \geq \frac{|S|}{\Delta ([S]) +1} \geq \frac{k(r-1)+1}{\Delta([S]) +1} \geq \frac{k(r-1)+1}{k}=r-1 + \frac{1}{k} > r-1.
\]
Therefore, $\alpha([S]) \geq r$. Take any set of $r$ independent vertices in $[S]$ and together with $u$, they form a $K_{1,r}$, which is a contradiction. 
\end{proof}

\begin{theorem}
Let $G=(V,E)$ be a $K_{1,r}$-free graph with $\delta \geq 1$, where $k$ and $r \geq 3$ are positive integers. Then,
\[
F_{k(r-1)}(G) \leq n - \alpha_k(G).
\]
\end{theorem}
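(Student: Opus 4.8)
The plan is to show that a maximum $k$-independent set $I$, after possibly being enlarged, serves as the complement of a $k(r-1)$-forcing set; equivalently, that $V - I$ is a $k(r-1)$-forcing set, which immediately gives $F_{k(r-1)}(G) \leq |V-I| = n - \alpha_k(G)$. So first I would color all vertices in $V - I$ and run the $k(r-1)$-forcing process, aiming to show every vertex of $I$ eventually becomes colored.

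The engine of the argument is Proposition \ref{prop}: every vertex $u \in V - I$ has at most $k(r-1)$ neighbors in $I$. Since the vertices of $I$ are precisely the initially uncolored vertices, each $u \in V-I$ begins with at most $k(r-1)$ uncolored neighbors, and hence satisfies the color change rule for $k(r-1)$-forcing on the very first step. Thus every vertex in $V-I$ simultaneously forces all of its neighbors in $I$. The key observation is then that \emph{every} vertex of $I$ has a neighbor in $V-I$: indeed, since $I$ is $k$-independent, each $v \in I$ has at most $k-1 \leq k(r-1)$ neighbors inside $I$, and because $\delta \geq 1$ every vertex has degree at least one, but this alone does not force a neighbor outside $I$. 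The cleaner route is to note that if some $v \in I$ had all its neighbors inside $I$, then together with those neighbors one could enlarge $I$ while keeping induced degree below $k$ only in limited cases, so instead I would argue directly: because $I$ is a \emph{maximum} $k$-independent set, no vertex of $V-I$ can be added to $I$, which constrains the structure, but what we actually need is that each $v \in I$ is dominated by $V-I$. I would establish this by observing that if $v \in I$ had no neighbor in $V-I$, then all of $v$'s neighbors lie in $I$; this is consistent with $k$-independence only when $\deg(v) \leq k-1$, and in that case $v$ is simply a low-degree vertex whose forcing I must handle separately.

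Consequently the main obstacle is exactly the vertices of $I$ all of whose neighbors lie within $I$ (the ``isolated-in-$V-I$'' vertices): the one-step argument colors only those vertices of $I$ adjacent to $V-I$, and I must propagate the coloring through the interior of $I$. To handle this I would iterate: after the first step, many vertices of $I$ are colored; any remaining uncolored $v \in I$ now has fewer uncolored neighbors, and since its total degree among $I$ is at most $k-1 < k(r-1)$, any already-colored neighbor of $v$ (whether in $V-I$ or now-colored in $I$) will have at most $k(r-1)$ uncolored neighbors and can force $v$. Because the induced subgraph on $I$ has maximum degree at most $k-1$ and $G$ is connected on each component, the coloring spreads throughout $I$; formalizing this propagation—showing every uncolored vertex always has a colored neighbor able to force it, using that forcers in $V-I$ have at most $k(r-1)$ uncolored neighbors while forcers inside $I$ have at most $k-1$—is the step requiring the most care. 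Once every vertex of $I$ is shown to be colored, the bound $F_{k(r-1)}(G) \leq n - \alpha_k(G)$ follows at once.
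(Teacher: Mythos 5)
Your proposal follows the same overall strategy as the paper: color the complement of a maximum $k$-independent set $I$ and use Proposition \ref{prop} to conclude that every vertex of $V-I$ can force all of its uncolored neighbors. But you were right to be suspicious of the one-step argument. The paper's own proof simply asserts that ``after the first step of the $k(r-1)$-forcing process, all of $G$ will be colored,'' and that assertion requires every vertex of $I$ to have a neighbor in $V-I$, which maximality of $I$ does not provide: a vertex $v \in I$ with $\deg(v) \leq k-1$ may have all of its neighbors inside $I$. (Concretely, take $G = P_4$ with vertices $v_1v_2v_3v_4$, $k=2$, $r=3$, and the maximum $2$-independent set $I = \{v_1, v_2, v_4\}$; then $v_1$ has its only neighbor inside $I$ and is not colored after the first step.) So the obstacle you isolate -- the vertices of $I$ whose entire neighborhood lies in $I$ -- is real, and the paper's proof glosses over it. Your repair is also sound as far as it goes: once a vertex of $I$ is colored, its uncolored neighbors all lie in $I$ and hence number at most $k-1 \leq k(r-1)$, so every colored vertex can always force all of its uncolored neighbors, and the coloring propagates through any component of the subgraph induced by $I$ that has a neighbor in $V-I$.

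The genuine gap is the one your last paragraph papers over: the propagation reaches only those components of the subgraph induced by $I$ that touch $V-I$, and your appeal to connectivity is not available, since the theorem assumes only $\delta \geq 1$, not that $G$ is connected. A component of $G$ lying entirely inside $I$ is never colored. Moreover, this gap cannot be closed under the stated hypotheses, because the statement itself fails for disconnected graphs. Take $k=2$, $r=3$, and let $G$ be the disjoint union of the bowtie graph $B$ (two triangles sharing a vertex $u$) with a single edge $K_2$. Then $G$ is claw-free with $\delta = 1$ and $\Delta = 4 = k(r-1)$, and $\alpha_2(G) = 4 + 2 = 6$, so $n - \alpha_2(G) = 1$; yet $F_4(G) = F_4(B) + F_4(K_2) = 1 + 1 = 2$, since a colored vertex can never force across components, and the $K_2$ component lies wholly inside every maximum $2$-independent set. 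What your argument actually proves, once the propagation step is written out, is the theorem for graphs in which no component is entirely $k$-independent -- in particular for connected graphs with $\Delta \geq k$, which is implicit in the paper's standing convention that $k$-forcing is considered only for $k \leq \Delta$. That is the correct theorem; both the paper's one-step proof and the unqualified statement are flawed, and your version, suitably restricted, is the fix.
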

\begin{proof}
Let $I$ be a maximum $k$-independent set. By Proposition \ref{prop}, every vertex in $V-I$ has at most $k(r-1)$ neighbors in $I$. Color the vertices of $V-I$ and notice that after the first step of the $k(r-1)$-forcing process, all of $G$ will be colored. Hence, $V-I$ is a $k(r-1)$-forcing set and $F_{k(r-1)}(G) \leq |V-I| = n - \alpha_k(G)$.
\end{proof}

This gives the following result for the independence number when $k=1$.

\begin{corollary}
Let $G=(V,E)$ be a $K_{1,r}$-free graph with $\delta \geq 1$, where $r\geq 3$ is an integer. Then,
\[
F_{r-1}(G) \leq n - \alpha(G).
\]
\end{corollary}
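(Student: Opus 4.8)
The plan is to obtain this corollary as the direct $k=1$ specialization of the theorem that immediately precedes it, namely $F_{k(r-1)}(G) \le n - \alpha_k(G)$, which holds for every $K_{1,r}$-free graph with $\delta \ge 1$ and every positive integer $k$ (with $r \ge 3$). Since the corollary imposes exactly the hypotheses of that theorem ($K_{1,r}$-free, $\delta \ge 1$, $r \ge 3$ an integer), setting $k=1$ is legitimate and requires no new argument. The first step is therefore simply to invoke the theorem with $k=1$.

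The only things to check are the two notational substitutions produced by $k=1$. First, the index on the forcing number becomes $k(r-1) = 1 \cdot (r-1) = r-1$, so $F_{k(r-1)}(G) = F_{r-1}(G)$, matching the left-hand side of the claimed inequality. Second, the $k$-independence number collapses to the ordinary independence number: by the definition given in the introduction, a $1$-independent set is a set inducing a subgraph of degree less than $1$, i.e.\ an ordinary independent set, so that $\alpha_1(G) = \alpha(G)$ (this identity is recorded explicitly in the definition of $\alpha_k$). Substituting these two identities into $F_{k(r-1)}(G) \le n - \alpha_k(G)$ yields exactly $F_{r-1}(G) \le n - \alpha(G)$, which is the desired bound.

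There is essentially no obstacle here; the entire content of the corollary is the recognition that the preceding theorem already covers the case $k=1$, together with the trivial identification $\alpha_1 = \alpha$. If one preferred a self-contained argument rather than a one-line specialization, the cleanest route would be to apply Proposition \ref{prop} with $k=1$: a maximum independent set $I$ (so $|I| = \alpha(G)$) has the property that every vertex of $V - I$ has at most $r-1$ neighbors in $I$, whence coloring $V - I$ forces the entire graph in a single step of the $(r-1)$-forcing process, giving $F_{r-1}(G) \le |V - I| = n - \alpha(G)$. Either way, the proof is immediate, and I would write it as the single sentence ``Set $k=1$ in the previous theorem and recall that $\alpha_1 = \alpha$.''
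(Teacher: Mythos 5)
Your proposal is correct and matches the paper exactly: the paper presents this corollary as the immediate $k=1$ specialization of the preceding theorem (with $\alpha_1=\alpha$), offering no separate proof. Your one-line specialization, and even your optional self-contained fallback via Proposition \ref{prop}, is precisely the intended argument.
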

 
Additionally, when $r=3$, that is the claw-free case, we get the following bound.

\begin{corollary}
Let $G=(V,E)$ be a $K_{1,3}$-free graph (a claw-free graph) with $\delta \geq 1$. Then, for every positive integer $k$,
\[
F_{2k}(G) \leq n - \alpha_k(G).
\]
\end{corollary}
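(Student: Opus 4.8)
The plan is to obtain this as the special case $r=3$ of the immediately preceding theorem. First I would observe that a claw-free graph is exactly a $K_{1,3}$-free graph, so the hypotheses of that theorem are met with $r=3$, and that the theorem already permits $k$ to be any positive integer and requires only $\delta \geq 1$. Then, substituting $r=3$ into the conclusion $F_{k(r-1)}(G) \leq n - \alpha_k(G)$, I would compute $k(r-1) = k\cdot 2 = 2k$, which yields $F_{2k}(G) \leq n - \alpha_k(G)$ directly.

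For a self-contained argument I would instead run the same reasoning that proves the theorem, specialized to $r=3$. Let $I$ be a maximum $k$-independent set. By Proposition \ref{prop} applied with $r=3$, every vertex of $V-I$ has at most $k(r-1) = 2k$ neighbors in $I$. Coloring the vertices of $V-I$, each such vertex then has at most $2k$ non-colored neighbors (all lying in $I$), so a single step of the $2k$-forcing process colors all of $I$ and hence all of $G$. Thus $V-I$ is a $2k$-forcing set, giving $F_{2k}(G) \leq |V-I| = n - |I| = n - \alpha_k(G)$.

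Since the corollary is a direct substitution into an already-established theorem, there is no genuine obstacle to overcome; the only things to verify are the routine arithmetic $k(r-1) = 2k$ at $r=3$ and that the hypothesis $r \geq 3$ is satisfied (it is, with equality). Consequently I expect the proof to reduce to a single invocation of the theorem.
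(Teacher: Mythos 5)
Your proposal is correct and matches the paper exactly: the corollary is stated there as an immediate consequence of the preceding theorem, obtained by setting $r=3$ so that $k(r-1)=2k$, which is precisely your first paragraph (and your self-contained second paragraph simply re-runs the theorem's own proof specialized to $r=3$). Nothing further is needed.
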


\section{Conjectures, open problems, and future research}

Implicit as open problems for all the results in our paper, are the characterizations of the cases of equality.  In particular, we make the following conjecture about Corollary \ref{cor3}.

\begin{conjecture}
Let $G=(V,E)$ be a connected graph with $\Delta \ge 2$. Then,
\[
Z(G)=F_1(G) = \frac{(\Delta-2)n+2}{\Delta-1}
\]
if and only if $G=K_{\Delta +1}$ or $G=K_{\Delta,\Delta}$.
\end{conjecture}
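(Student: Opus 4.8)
The plan is to handle the forward implication by direct computation and the converse by tracing the equality case through the proof of Corollary~\ref{cor3}. For the forward direction I would simply verify both candidates meet the bound. For $G=K_{\Delta+1}$ we have $n=\Delta+1$ and, by the Observation, $F_1(G)=n-1=\Delta$, while $\frac{(\Delta-2)(\Delta+1)+2}{\Delta-1}=\frac{\Delta(\Delta-1)}{\Delta-1}=\Delta$. For $G=K_{\Delta,\Delta}$ we have $n=2\Delta$ and $F_1(G)=2\Delta-2$ (the standard value $Z(K_{m,m})=2m-2$), while $\frac{(\Delta-2)(2\Delta)+2}{\Delta-1}=\frac{2(\Delta-1)^2}{\Delta-1}=2\Delta-2$. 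Both attain equality.

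For the converse, suppose equality holds, and read it off the two-step chain $F_1(G)\le n-\gamma_c(G)\le \frac{(\Delta-2)n+2}{\Delta-1}$ from the theorem following Corollary~\ref{cor3}; equality of the outer terms forces equality in each link. Equality in the right link, together with the spanning-tree argument used to prove it, forces a maximum-leaf spanning tree $T$ of $G$ in which every non-leaf has degree exactly $\Delta$ (the inequality $\sum_{i=2}^{D}(D-i)n_i\ge 0$ becomes tight and $D=\Delta$). Writing $Q$ for the internal vertices of $T$ and $S$ for its leaves, I would record three consequences: $Q$ induces a tree with $q:=|Q|=\frac{n-2}{\Delta-1}$ vertices; each vertex of $Q$ has degree exactly $\Delta$ in $G$ and is therefore saturated by tree-edges, so $G$ has no non-tree edge incident to $Q$; and hence every leaf has its unique tree-neighbour as its only neighbour in $Q$. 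In short, $G=T+F$ where the extra edges $F$ join only leaves of $T$, and $|S|=\frac{(\Delta-2)n+2}{\Delta-1}$. Equality in the left link, $F_1(G)=n-\gamma_c(G)=|S|$, says precisely that $S$ is a minimum zero forcing set.

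The case $q=1$ is immediate: then $|S|=\Delta=n-1$, so $F_1(G)=n-1$, and by the discussion following the Observation this forces $G=K_n=K_{\Delta+1}$. The substance is the case $q\ge 2$, where I would aim to prove first that $G$ is $\Delta$-regular and then that $q=2$ with $F$ complete between the two leaf-classes, yielding $K_{\Delta,\Delta}$. Regularity is the crux: I would argue that if some leaf $\ell$ had degree $<\Delta$, then $S\setminus\{\ell\}$ (or a comparably smaller set) would still force all of $G$, contradicting $F_1(G)=|S|$; this is exactly the rigidity one sees in small cases, where deleting any single edge of $K_{\Delta,\Delta}$ strictly drops the zero forcing number below $2\Delta-2$. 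Granting $\Delta$-regularity, the edges of $F$ give each leaf exactly $\Delta-1$ leaf-neighbours, and I would then use a leaf of the internal tree $[Q]$ together with regularity and the minimality of $S$ to rule out $q\ge 3$ (an internal path of length at least two again lets one drop a vertex from $S$), forcing $[Q]$ to be a single edge, the two leaf-classes to have size $\Delta-1$ and to be joined completely, and hence $G=K_{\Delta,\Delta}$.

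I expect the regularity step and the elimination of $q\ge 3$ to be the main obstacle, since each requires converting a local structural deficiency (a low-degree leaf, or an internal tree-vertex with two internal neighbours) into a genuinely smaller forcing set. Controlling how the forcing process propagates through the extra edges $F$ is where the careful work lies, and a clean bookkeeping of forcing chains on $G=T+F$ is the tool I would reach for to make these deletions rigorous.
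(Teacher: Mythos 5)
The first thing to note is that the paper does not prove this statement at all: it appears in Section 6 as an open conjecture, so there is no proof of record to compare yours against, and your attempt has to stand on its own. The verification half is fine, and your equality-chain analysis is correct as far as it goes: equality forces both links of $F_1(G)\le n-\gamma_c(G)\le\frac{(\Delta-2)n+2}{\Delta-1}$ to be tight, tightness of the right link does force a maximum-leaf spanning tree whose internal vertices all have degree exactly $\Delta$ (hence receive no chords), and your $q=1$ case correctly recovers $K_{\Delta+1}$ from the fact that $F_1(G)=n-1$ only for complete graphs. But the two steps you yourself flag as ``the main obstacle'' --- $\Delta$-regularity and the elimination of $q\ge3$ --- are not proved, and they cannot be proved by the deletion arguments you sketch, because the claim you are trying to establish there is false as stated. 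Take $\Delta=2$ and $G=C_n$ with $n\ge5$: then $Z(C_n)=2=\frac{(\Delta-2)n+2}{\Delta-1}$, so equality holds; $G$ is $2$-regular; the internal part $Q$ of a maximum-leaf spanning tree (a Hamiltonian path) has $q=n-2\ge3$ vertices; and yet $G$ is neither $K_3$ nor $K_{2,2}$, and no forcing set of size $1$ exists, so ``an internal path of length at least two lets one drop a vertex from $S$'' fails outright here. Cycles of length at least five are counterexamples to the conjecture as literally written (it evidently needs $\Delta\ge3$), so any purported proof must break somewhere, and yours breaks exactly at these two unproven steps.

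There is also a smaller but real logical slip earlier: from $F_1(G)=n-\gamma_c(G)=|S|$ you conclude that ``$S$ is a minimum zero forcing set,'' where $S$ is the leaf set of $T$. Equality of the numbers does not make this particular set a forcing set --- it only says the minimum forcing set has the same cardinality as $S$, and that minimum set may sit elsewhere in the graph. Since your regularity and $q\le2$ arguments all proceed by deleting a vertex from $S$ and claiming the remainder ``still forces all of $G$,'' they presuppose that $S$ forces $G$ in the first place, which you have not established. So even in the regime $\Delta\ge3$, where the conjecture may well be true, the plan as written has a foundational gap before the hard combinatorial work even begins.
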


We are also particularly interested in the characterization of the case of equality for Corollary \ref{connected_dom2}. We mention that equality holds for $K_n$, for $C_n$, and for $K_{p,q}$ with $p \geq q \geq 2$, but offer no conjecture.
\begin{problem}
Characterize the connected graphs with $n \geq 2$ for which,
\[
Z(G)=F_1(G)=n - \gamma_c(G).
\]
\end{problem}

In the future, we would like to study the $k$-forcing number for other families of simple graphs, work on improvements of our results when more structural information is assumed, and work on characterizations of the cases of equality for our bounds.

\end{document}